\documentclass[11pt,a4paper]{amsart}

\usepackage[T1]{fontenc}
\usepackage{lmodern}
\usepackage[a4paper,margin=30mm]{geometry}
\usepackage{amsmath,amssymb,amsthm,comment}
\usepackage{microtype}
\usepackage[hidelinks]{hyperref}
\usepackage{xcolor}

\hypersetup{
  pdftitle={A direct proof of the equivalence of the A2 and the Helson--Szego conditions},
  pdfauthor={Georgios Psaromiligkos, Michail Paparizos, Christos Grigoriadis}
}

\newcommand{\T}{\mathbb{T}}
\newcommand{\D}{\mathbb{D}}

\DeclareMathOperator{\Real}{Re}
\DeclareMathOperator{\Imag}{Im}
\DeclareMathOperator{\sgn}{sgn}

\numberwithin{equation}{section}

\theoremstyle{plain}
\newtheorem{theorem}{Theorem}[section]
\newtheorem{lemma}[theorem]{Lemma}
\newtheorem{proposition}[theorem]{Proposition}

\theoremstyle{definition}
\newtheorem{definition}[theorem]{Definition}

\theoremstyle{remark}

\AtBeginEnvironment{theorem}{%
}

\title[The $\mathcal{A}_2$ and Helson--Szeg\H{o} conditions]
{A direct proof of the equivalence of the $\mathcal{A}_2$ and the Helson--Szeg\H{o} conditions}

\author[C. Grigoriadis]{Christos Grigoriadis}
\address{Telcontar CG, Chrisanthou Mylona 38, Strovolos, 2014, Nicosia, Cyprus}
\email{grigori7519@gmail.com}

\author[M. Paparizos]{Michail Paparizos}
\address{Department of Mathematics, Michigan State University, East Lansing, Michigan, USA}
\email{paparizo@msu.edu}

\author[G. Psaromiligkos]{Georgios Psaromiligkos}
\address{Department of Mathematics, University of Thessaly, Lamia, Greece}
\email{gpsaromiligkos@uth.gr}

\begin{document}

\begin{abstract}
We give a direct proof of the equivalence between the Muckenhoupt $\mathcal{A}_2$ and the Helson--Szeg\H{o} conditions on the unit circle $\T$. A positive weight $w$ belongs to $\mathcal{A}_2(\T)$ if and only if $\log w=u+Hv$, where $u,v$ are real bounded functions, $\|v\|_\infty<\frac{\pi}{2}$, and $H$ denotes the periodic Hilbert transform. The classical proof combines the Helson--Szeg\H{o} theorem with the Hunt--Muckenhoupt--Wheeden characterization of weighted boundedness of the Hilbert transform. Our proof derives the Helson--Szeg\H{o} representation directly from the Muckenhoupt $\mathcal{A}_2$ condition, without using boundedness of $H$ on $L^2(w)$.
\end{abstract}

\maketitle

\section{Introduction}
\label{sec:introduction}

In 1960, Helson and Szeg\H{o} \cite{HS1960} linked an analytic representation of a positive weight with the boundedness of the conjugate-function operator in the corresponding weighted $L^2$ space. In particular they proved that
\begin{equation}\label{Hilbert ell 2}
\int_{\mathbb{R}}|Hf|^2wdx \leq C \int_{\mathbb{R}}|f|^2wdx ,
\end{equation}
if and only if
\begin{equation}\label{helsz}
\log w = u + Hv, \ \ u, v \in L^\infty,\ \Vert v \Vert_\infty < \frac{\pi}{2}.
\end{equation}

A second description emerged through real-variable weighted inequalities. Muckenhoupt's 1972 work  \cite{M1972} characterized weighted boundedness of the Hardy maximal operator by the $\mathcal{A}_p$ condition. In \cite{HMW1973}, Hunt, Muckenhoupt, and Wheeden proved the corresponding characterization for the conjugate function and the Hilbert transform in 1973. In particular, for $p=2$, they showed that \eqref{Hilbert ell 2} holds if and only if $w$ is an $\mathcal{A}_2$ weight, that is,
\begin{equation}\label{A2 weights}
[w]_{\mathcal{A}_2} =\sup_{I\subset\mathbb{R}} \langle w \rangle_I \langle w^{-1} \rangle_I :=\sup_{I\subset\mathbb{R}}\ \left(\frac{1}{|I|}\int_I w dx\right)\left(\frac{1}{|I|}\int_I \frac{1}{w} dx\right)<\infty.
\end{equation}

Consequently, the \(\mathcal{A}_2\) condition and the Helson-Szeg\H{o} condition are equivalent. However, the known proof of this equivalence proceeds through the characterization of the weighted \(L^2\)-boundedness of the Hilbert transform \eqref{Hilbert ell 2}. To the best of our knowledge, no direct proof of the implication from the $\mathcal{A}_2$ condition to the Helson--Szeg\H{o} representation, avoiding boundedness of $H$ on $L^2(w)$, has previously appeared in the literature. The purpose of this paper is to provide such a proof.

We will work on the unit circle $\mathbb{T}$, so we adjust the conditions: A measurable function $0<w<\infty$ almost everywhere belongs to $\mathcal{A}_2(\T)$ if $w,w^{-1}\in L^1(\T)$ and
\begin{displaymath}
[w]_{\mathcal{A}_2}
=
\sup_I\langle w\rangle_I\langle w^{-1}\rangle_I
<
\infty.
\end{displaymath}
The supremum is over all arcs, including the whole circle. We give a direct proof of the following:
\begin{theorem}
\label{main}
A measurable function $0<w<\infty$ almost everywhere belongs to $\mathcal{A}_2(\T)$ if and only if there are $u,v\in L^\infty(\T)$ such that
$$
w=e^{u+Hv}
\quad\text{almost everywhere and }
\quad
\|v\|_\infty<\frac{\pi}{2}.
$$
\end{theorem}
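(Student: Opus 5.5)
For the implication from the Helson--Szeg\H{o} representation to $\mathcal{A}_2(\T)$ I would argue with analytic functions and the Poisson kernel. Since $e^{\pm u}$ is bounded above and below, it suffices to treat $w=e^{Hv}$ with $\|v\|_\infty\le a<\frac{\pi}{2}$. Let $V$ be the Poisson extension of $v$ and $\tilde V$ its harmonic conjugate, normalized so that $\tilde V(0)=0$ and $\tilde V$ has boundary values $Hv$. Put $f=V+i\tilde V$. Then
\[
\Real e^{-if}=e^{\tilde V}\cos V,
\]
which is a positive harmonic function. Its boundary values are $e^{Hv}\cos v\ge \cos a\, e^{Hv}$. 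By Fatou's lemma and the mean-value property we get
\[
P[e^{Hv}](z)\le \frac{1}{\cos a}\, e^{\tilde V(z)}\cos V(z)\le \frac{e^{\tilde V(z)}}{\cos a}
\qquad(z\in\D).
\]
For an arc $I$ with centre $\zeta$, set $z_I=(1-|I|)\zeta$. The Poisson kernel satisfies $P_{z_I}\gtrsim |I|^{-1}$ on $I$, so $\langle w\rangle_I\lesssim e^{\tilde V(z_I)}/\cos a$. Applying the same argument with $-v$ in place of $v$ gives $\langle w^{-1}\rangle_I\lesssim e^{-\tilde V(z_I)}/\cos a$. Multiplying, $[w]_{\mathcal{A}_2}\lesssim (\cos a)^{-2}$. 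The same estimate with $z=0$ also gives $w,w^{-1}\in L^1(\T)$.

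For the converse I would try to reverse this mechanism. Let $A$ and $B$ be the analytic functions on $\D$ with $\Real A=P[w]$, $\Real B=P[w^{-1}]$ and $\Imag A(0)=\Imag B(0)=0$. Both have positive real part, so $\theta_1=\arg A$ and $\theta_2=\arg B$ take values in $(-\frac{\pi}{2},\frac{\pi}{2})$. Moreover $\log A=\log|A|+i\theta_1$ is analytic, hence on $\T$ we have $\log|A|=c-H\theta_1$. Since $\Real A=w$ almost everywhere on $\T$, we also have $\log w=\log|A|+\log\cos\theta_1$, that is,
\[
\log w=c+\log\cos\theta_1-H\theta_1 .
\]
So $u=c+\log\cos\theta_1$ and $v=-\theta_1$ give the Helson--Szeg\H{o} representation, provided $\cos\theta_1\ge\delta>0$. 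On $\T$ we have $|A|=w/\cos\theta_1$ and $|B|=w^{-1}/\cos\theta_2$, so $|AB|=(\cos\theta_1\cos\theta_2)^{-1}$. Therefore an estimate $|AB|\le C$ would force both angles away from $\pm\frac{\pi}{2}$, and we would be done.

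The main obstacle is that $|AB|\le C$ is \emph{not} true for an arbitrary $\mathcal{A}_2$ weight. For example, $w=1+\chi_E$ is in $\mathcal{A}_2$, but $Hw$ has logarithmic singularities at the endpoints of $E$, so $\theta_1$ approaches $\pm\frac{\pi}{2}$ there. My plan is therefore to first replace $w$ by a comparable weight $\tilde w\asymp w$. The factor $w/\tilde w$ is bounded above and below, so its logarithm can be absorbed into $u$. The point of the replacement is that $\tilde w$ should be regular at every scale. I would build $\tilde w$ by a dyadic stopping-time construction on $\T$ or, alternatively, by evaluating $P[w]$ on a family of circles inside $\D$. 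Only the $\mathcal{A}_2$ inequality and its consequences would be used: doubling, and the bound $P[w](z)\,P[w^{-1}](z)\lesssim[w]_{\mathcal{A}_2}$, which follows by summing the dyadic tails of the Poisson kernel. The required property of $\tilde w$ is that the analytic functions $A$ and $B$ built from $\tilde w$ and $\tilde w^{-1}$ satisfy $|A(z)B(z)|\lesssim P[\tilde w](z)\,P[\tilde w^{-1}](z)$. That bound, combined with the Poisson form of $\mathcal{A}_2$, yields $\cos\theta_1\cos\theta_2\ge C^{-1}$. Proving this control of the conjugate parts for a suitably regularized weight is the step I expect to be hardest.
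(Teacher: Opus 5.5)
Your proof that the Helson--Szeg\H{o} representation implies $\mathcal{A}_2(\T)$ is correct and is essentially the paper's Proposition~\ref{converse}. Your $e^{-if}$ is the paper's $G=e^{L}$ with $L=P(Hv)-iPv$, and the sector inequality, Fatou's lemma and the lower bound for $P_{z_I}$ on $I$ (the converse half of Lemma~\ref{poisson-A2}) are the same steps. The implication from $\mathcal{A}_2$ to the representation, however, is not proved, and the step you postpone is the whole difficulty.

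\textbf{Why the reduction does not reduce anything.} The property you require, $|AB|\lesssim P\tilde w\,P(\tilde w^{-1})=\Real A\,\Real B$, implies $|A|\le C\Real A$ because $|B|\ge\Real B$. So $A$ maps $\D$ into a sector $|\arg A|\le\beta<\frac{\pi}{2}$. On $\T$ this reads $|H\tilde w+c|\le C'\tilde w$ almost everywhere, with $c=\Imag A(0)$; the value of $c$ is irrelevant to your argument. The existence of such a $\tilde w\asymp w$ is a restatement of the theorem. Indeed, if $\log w=u+Hv$ with $\|v\|_\infty\le\beta$, then $\tilde w=e^{Hv}\cos v\asymp w$, and $e^{P(Hv)-iPv}$ is an analytic function with real part $P\tilde w$ and values in that sector. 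The proposal gives no mechanism by which positive, averaged consequences of $\mathcal{A}_2$ (doubling, reverse H\"older, bounds on $Pw\,P(w^{-1})$) could yield a pointwise bound on the conjugate function of $\tilde w$. Such a bound depends on cancellation across all scales at once. The constructions you mention do not obviously help. A dyadic stopping-time weight is a step function, and its conjugate function has logarithmic singularities at every jump, which is exactly the obstruction in your example $1+\chi_E$. Likewise, $P[w]$ restricted to a fixed circle is bounded below, hence not comparable to $w$ when, say, $w=|e^{it}-1|^{1/2}$. (A side remark: $Pw\,P(w^{-1})$ is controlled by $[w]_{\mathcal{A}_2}^2$, not by $[w]_{\mathcal{A}_2}$. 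For $w=|e^{it}-1|^{\alpha}$ with $\alpha\to1$ one has $[w]_{\mathcal{A}_2}\asymp(1-\alpha)^{-1}$, while the product tends to $\sec^2(\frac{\pi\alpha}{2})\asymp(1-\alpha)^{-2}$ along the radius to $1$.)

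\textbf{The missing idea: avoid constructing $v$ at all.}
\begin{itemize}
\item The paper first proves the a priori estimate of Proposition~\ref{graph-estimate}: $|\int_\T f\log w\,dm|\le M\|f\|_1+(\frac{\pi}{2}+\eta)\|Hf\|_1$ for real mean-zero trigonometric polynomials $f$.
\item This comes from the mean-value inequality at $0$ for the superharmonic function of Lemma~\ref{gluing}. That function is glued from $\mathcal{E}(Pw;F)$ and $\mathcal{E}(P(w^{-1});-F)$, with a correction in a thin cone around $\{\Real F=0\}$, where $F$ has boundary values $f+iHf$.
\item Applying the estimate to $w^{1+\delta}$ (Lemma~\ref{self-improvement}) lowers the coefficient of $\|Hf\|_1$ to some $\alpha<\frac{\pi}{2}$.
\item Hahn--Banach on $L^1\oplus L^1$ with norm $C\|g\|_1+\alpha\|k\|_1$, followed by $L^1$--$L^\infty$ duality, then yields $u$ and $v$ with $\|v\|_\infty\le\alpha$ (Proposition~\ref{thm:HS}).
\end{itemize}
This route uses only the relations $-\log P(w^{-1})\le P(\log w)\le\log Pw$ and $Pw\,P(w^{-1})\le K$ inside the disc, tested against analytic functions built from $f$. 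It never has to control the conjugate function of the weight, which is exactly what your plan would require.
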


\subsection{Notation and conventions} The circle and real-line formulations are related by the Cayley transform $C(z)=\frac{z-i}{z+i}$. The $\mathcal{A}_2$ conditions correspond under composition with $C$, as follows from their equivalent Poisson formulations. Harmonic conjugates correspond up to an additive constant, which is absorbed into $u$. All integrals and norms on $\T$ use normalized arc measure
\begin{displaymath}
dm(e^{it})=\frac{dt}{2\pi},
\qquad
|I|=m(I),
\qquad
\langle g\rangle_I=\frac{1}{|I|}\int_I g\,dm.
\end{displaymath}
For $g\in L^1(\T)$, its Poisson extension is
\begin{displaymath}
Pg(z)=\int_\T P_z(\zeta)g(\zeta)\,dm(\zeta),
\ \ \text{ where }\ \ 
P_z(\zeta)=\frac{1-|z|^2}{|\zeta-z|^2}.
\end{displaymath}
The periodic Hilbert transform on unweighted $L^2(\T)$ is defined by
\begin{displaymath}
\widehat{Hg}(n)=-i\sgn(n)\widehat g(n),
\qquad
\sgn(0)=0.
\end{displaymath}
Also, $H^*=-H$ follows directly from Parseval.

\subsection{Historical background}

As recalled in the introduction, Helson and Szeg\H{o} characterized the boundedness of $H$ on $L^2(w)$ through their analytic representation of the weight \cite{HS1960}, while Hunt, Muckenhoupt, and Wheeden characterized the same boundedness through the $\mathcal{A}_2$ condition \cite{HMW1973}. Together, these results give the equivalence in Theorem \ref{main}. Their original proofs were lengthy and technically demanding, so obtaining the equivalence in this way required combining two substantial arguments.

Treil and Volberg later gave a simpler complex-analytic proof of the Hunt--Muckenhoupt--Wheeden characterization at $p=2$: a weight belongs to $\mathcal{A}_2(\T)$ if and only if $H$ is bounded on $L^2(w)$ \cite{TV1997CF}. Their argument uses harmonic extensions and a weighted Littlewood--Paley norm equivalence. This shortened one of the two parts of the classical argument, while retaining weighted operator boundedness as the intermediate step between the two descriptions of the weight.

The question of passing directly between these descriptions had already been discussed explicitly in Garc\'ia-Cuerva's survey \cite{GC1982}. He gave a direct argument showing that the Helson--Szeg\H{o} condition implies $\mathcal{A}_2$ and noted that a direct proof of the converse was not known. The absence of a direct proof of the equivalence is also emphasized by Garnett \cite[Notes to Chapter VI, p.~260]{G2007}.

Quantitative questions form a further part of the history. Petermichl proved the sharp $\mathcal A_p$ dependence for the Hilbert transform using dyadic representations and Bellman-function estimates \cite{P2007}, and Hyt\"onen established the sharp $\mathcal{A}_2$ bound for general Calder\'on--Zygmund operators \cite{H2012}. Lerner gave a simpler proof of the latter result using local mean oscillation and positive sparse averaging operators \cite{L2013}. Hyt\"onen, Lacey, Martikainen, Orponen, Reguera, Sawyer, and Uriarte-Tuero obtained sharp weak and strong estimates for maximal truncations \cite{HLMORSU2012}. In the two-weight problem, Lacey, Sawyer, Shen, and Uriarte-Tuero developed the global-to-local part of a real-variable characterization of the Hilbert-transform inequality \cite{LSSU2014}. These quantitative and two-weight results concern extensions of the operator problem and form part of the broader setting of the present question.

\subsection{The main estimate and the structure of the proof}

Our argument is motivated by Pichorides' inequality \cite[Theorem~2.4]{Pi1972}. For a nonnegative function $g\in L\log^+L(\T)$ normalized by $\int_\T g\,dm=1$, it gives $\int_\T g\log g\,dm\le\frac{\pi}{2}\|Hg\|_1$. Combined with exponential integrability of $\log w$, this suggests looking for a logarithmic pairing estimate with the critical coefficient $\frac{\pi}{2}$. 

The estimate needed here differs in both its object and its hypotheses. Pichorides controls the entropy of a nonnegative test function; we control the linear pairing $\int_\T(\log w)f\,dm$, where $w\in\mathcal{A}_2$ and $f$ is a real mean-zero trigonometric polynomial that may change sign. Applying his inequality to $|f|$ introduces $H(|f|)$, which does not provide the required control in terms of $Hf$. We instead prove the signed estimate
\begin{equation}\label{intro ineq}
\bigg|\int_\T f \log w\,dm\bigg|
\le
M\|f\|_1+
\bigg(\frac{\pi}{2}+\eta\bigg)\|Hf\|_1,
\end{equation}
for every $\eta>0$, with $M$ depending only on $[w]_{\mathcal{A}_2}$ and $\eta$. The estimate is therefore tailored to the logarithm of an $\mathcal{A}_2$ weight and preserves the signs of the test function.

The main new analytic ingredient is the gluing in Lemma \ref{gluing}. Similar ideas of joining harmonic pieces across the imaginary axis already appear in Pichorides' proof \cite[Lemma~3.5]{Pi1972} and in the harmonic-spline method of Ess\'en, Shea, and Stanton \cite[Section~2]{ESS2002}.

Self-improvement of $\mathcal{A}_2$ then supplies a coefficient strictly below $\frac{\pi}{2}$, after which Hahn--Banach yields the Helson--Szeg\H{o} representation.

Section \ref{sec:weights} establishes the main tools to prove Theorem \ref{main}. We first give the weight estimates needed in the argument. We then give a criterion for superharmonicity, which allows us to prove inequality \eqref{intro ineq}. Theorem \ref{main} is completed in Section \ref{sec:forward} using Hahn-Banach to prove the forward part. We also include the already known result, following Garc\'{i}a-Cuerva, for completeness of the argument.

\section*{Acknowledgements}

The authors would like to thank Ignacio Uriarte-Tuero for bringing this problem to their attention.

\section{Set-up and Preparation} \label{sec:weights}

\begin{lemma}\label{self-improvement}
Let $w$ be an $\mathcal{A}_2$ weight with $[w]_{\mathcal{A}_2}<\infty$. For 
$\displaystyle
\rho=1-\frac{1}{4[w]_{\mathcal{A}_2}}
$ 
and
$\delta=-\dfrac{\log\rho}{2\log4}>0$, the weight $w^{1+\delta}\in \mathcal{A}_2$. More precisely, one has
$$
\frac{1}{|I|}\int_I w^{1+\delta}\,dm
\le
D\langle w\rangle_I^{1+\delta}
\ \ \text{ and }\ \ 
[w^{1+\delta}]_{\mathcal{A}_2}\le D^2[w]_{\mathcal{A}_2}^{1+\delta}.
$$
for 
$D=\dfrac{2^\delta}{1-4^\delta\rho}$. The same reverse H\"older estimate holds for $w^{-1}$.
\end{lemma}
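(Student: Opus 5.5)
The plan is to run the classical Calderón--Zygmund stopping-time proof of the reverse Hölder inequality and track the constants explicitly. Fix an arc $I$ and decompose it dyadically by repeatedly halving subarcs. Two facts drive everything. First, $4^{2\delta}=\rho^{-1}$ by the choice of $\delta$, so $4^\delta\rho=\rho^{1/2}<1$; this makes the final geometric series converge and explains the factor $1-4^\delta\rho$ in $D$. Second, for any arc $J$ and measurable $E\subset J$, Cauchy--Schwarz gives
\[
\Big(\frac{|E|}{|J|}\Big)^2\le \frac{1}{|J|^2}\int_E w\,dm\int_E w^{-1}\,dm\le [w]_{\mathcal A_2}\,\frac{w(E)}{w(J)} .
\]
Consequently, if $|E|\ge |J|/2$ then $w(E)\ge w(J)/(4[w]_{\mathcal A_2})$, and so $w(J\setminus E)\le \rho\, w(J)$.

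Next I set up the stopping construction. Put $\lambda_k=4^k\langle w\rangle_I$. Let $\Omega_0=I$, and for $k\ge1$ let $\Omega_k$ be the union of the maximal dyadic subarcs $Q\subset I$ with $\langle w\rangle_Q>\lambda_k$. By maximality, the dyadic parent of each such $Q$ has average at most $\lambda_k$, so $\langle w\rangle_Q\le 2\lambda_k$. The $\Omega_k$ are nested. Now take a stopping arc $Q$ of generation $k$ and let $Q_j$ be the generation-$(k+1)$ arcs inside it. Then
\[
\sum_j |Q_j|\le \lambda_{k+1}^{-1}\, w(Q)\le \frac{2\lambda_k}{4\lambda_k}\,|Q|=\frac{|Q|}{2}.
\]
Applying the preliminary fact with $E=Q\setminus\bigcup_j Q_j$ gives $w(\Omega_{k+1}\cap Q)\le\rho\,w(Q)$. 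Summing over the generation-$k$ arcs and iterating yields $w(\Omega_k)\le \rho^k w(I)$.

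By dyadic Lebesgue differentiation, $w\le\lambda_{k+1}$ a.e. on $\Omega_k\setminus\Omega_{k+1}$, and $|\bigcap_k\Omega_k|=0$. Hence
\[
\int_I w^{1+\delta}\,dm\le\sum_{k\ge0}\lambda_{k+1}^{\delta}\,w(\Omega_k)\le 4^\delta\langle w\rangle_I^\delta\, w(I)\sum_{k\ge0}(4^\delta\rho)^k,
\]
which after dividing by $|I|$ is the reverse Hölder inequality with constant $4^\delta/(1-4^\delta\rho)$. To reach the stated $D=2^\delta/(1-4^\delta\rho)$ I would sharpen the first layer. On $\Omega_0\setminus\Omega_1$ one already has $w\le\lambda_1$, but a better bookkeeping is to use $\langle w\rangle_Q\le 2\lambda_k$ in place of $\lambda_{k+1}$. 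Concretely, I would write $\int_{Q} w^{1+\delta}$ over each stopping arc and apply the estimate on the complementary set $Q\setminus\Omega_{k+1}$ with the bound $w\le\lambda_{k+1}$ only there, so that the leading factor becomes $2^\delta$. I expect this constant bookkeeping to be the only delicate point; the structure of the argument is standard.

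Finally, $w^{-1}$ satisfies the same hypothesis, since $[w^{-1}]_{\mathcal A_2}=[w]_{\mathcal A_2}$. So the same argument gives $\langle w^{-1-\delta}\rangle_I\le D\langle w^{-1}\rangle_I^{1+\delta}$. Multiplying the two reverse Hölder estimates gives
\[
\langle w^{1+\delta}\rangle_I\langle w^{-1-\delta}\rangle_I\le D^2\big(\langle w\rangle_I\langle w^{-1}\rangle_I\big)^{1+\delta}\le D^2[w]_{\mathcal A_2}^{1+\delta}.
\]
Taking the supremum over arcs (the whole circle included) proves $[w^{1+\delta}]_{\mathcal A_2}\le D^2[w]_{\mathcal A_2}^{1+\delta}$.
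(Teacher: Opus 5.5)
Your argument has the same structure as the paper's. Cauchy--Schwarz gives $w(E)\le\rho\,w(J)$ when $|E|\le|J|/2$; then come a Calder\'on--Zygmund stopping time, a geometric series in $4^\delta\rho=\sqrt\rho$, and the product of the two reverse H\"older bounds. Everything up to the display giving the constant $4^\delta/(1-4^\delta\rho)$ is correct.

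The gap is the last step, where you try to reach the stated $D=2^\delta/(1-4^\delta\rho)$. The sharpening you describe does not work. On $Q\setminus\Omega_{k+1}$ the only pointwise information is $w\le\lambda_{k+1}=4^{k+1}\langle w\rangle_I$, which is exactly the bound you already used. The average bound $\langle w\rangle_Q\le 2\lambda_k$ gives no pointwise control of $w$ there. So with thresholds $\lambda_k=4^k\langle w\rangle_I$ you only get the reverse H\"older inequality with constant $2^\delta D$. That yields $[w^{1+\delta}]_{\mathcal{A}_2}\le 4^\delta D^2[w]_{\mathcal{A}_2}^{1+\delta}$, not the stated bounds. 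The qualitative conclusion $w^{1+\delta}\in\mathcal{A}_2$, which is all the paper uses later, does follow.

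The fix is to change the thresholds, not the bookkeeping. Take $\lambda_k=2\cdot 4^{k-1}\langle w\rangle_I$ for $k\ge1$.
\begin{itemize}
\item At the first step the root has average exactly $\langle w\rangle_I$, so $|\Omega_1|\le w(I)/\lambda_1=|I|/2$, and hence $w(\Omega_1)\le\rho\,w(I)$.
\item For $k\ge1$, maximality still gives $\langle w\rangle_Q\le 2\lambda_k$, and the ratio $\lambda_{k+1}/\lambda_k=4$ gives your halving estimate.
\end{itemize}
Then $w\le 2\cdot 4^k\langle w\rangle_I$ a.e.\ on $\Omega_k\setminus\Omega_{k+1}$, and the sum is exactly $2^\delta\langle w\rangle_I^\delta\, w(I)/(1-4^\delta\rho)$.

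This is what the paper's relative stopping rule achieves. Inside each selected arc $J$ it stops at $2\langle w\rangle_J$. Since generation-$n$ arcs satisfy $\langle w\rangle_J\le 4^n\langle w\rangle_I$, one gets $w\le 2\cdot 4^n\langle w\rangle_I$ on $G_n\setminus G_{n+1}$.
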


\begin{proof}
Let $\sigma:=w^{-1}$. Cauchy-Schwarz inequality gives that $[w]_{\mathcal{A}_2}\ge1$, so $0<\rho<1$ and $\delta>0$.

Let $E\subset I$ with $|E|\le\dfrac{|I|}{2}$. By Cauchy-Schwarz inequality,
$$
\frac{|I|^2}{4}
\ \le \ |I\setminus E|^2 
\ \le \ w(I\setminus E)\sigma(I)
\ \le \ [w]_{\mathcal{A}_2}|I|^2\frac{w(I\setminus E)}{w(I)}.
$$
Dividing by $|I|^2$ and rearranging we get $w(I\setminus E)\ge\dfrac{w(I)}{4[w]_{\mathcal{A}_2}}$, which implies $$w(E)\le\rho w(I).$$

The condition is symmetric in $w$ and $\sigma$, namely $\sigma(E)\le\rho \sigma(I)$ whenever $|E|\le\frac{|I|}{2}$.

Next, we fix an arc $I$ and use its dyadic subdivisions. We perform a classical stopping-time argument. Inside a selected dyadic arc $J$, select all maximal proper dyadic subarcs $Q$ satisfying
\begin{displaymath}
\langle w\rangle_Q>2\langle w\rangle_J.
\end{displaymath}
They are pairwise disjoint. Summing the stopping inequality over them gives
\begin{displaymath}
2\langle w\rangle_J\sum_Q|Q|
\le
\sum_Q w(Q)
\le
w(J),
\end{displaymath}
and hence $\displaystyle \sum_Q|Q|\le\frac{|J|}{2}$. Therefore, for $G:= \bigcup\limits_Q Q$, we have $|G|\le \dfrac{|J|}{2}$ and by the work above we get $$w(G) \le \rho w(J)$$

Next, let $\widetilde Q$ be the dyadic parent of a selected arc $Q$. If $\widetilde Q\ne J$, maximality means that its average does not exceed $2\langle w\rangle_J$. The same upper bound holds when $\widetilde Q=J$. Since a dyadic child has half the length of its parent,
\begin{displaymath}
\langle w\rangle_Q
\le
2\langle w\rangle_{\widetilde Q}
\le
4\langle w\rangle_J.
\end{displaymath}

Now, let $x\in J\setminus G$. On generation $n$, let $R_n$ be the dyadic arc which contains $x$. Since it is not maximal, we get 
$$
\langle w\rangle_{R_n} \leq 2\langle w\rangle_J.
$$
Dyadic differentiation therefore gives $w\le2\langle w\rangle_J$ almost everywhere there.

We now repeat this selection inside every selected arc. Let $G_0=I$, and let $G_n$ be the union of the maximal arcs in generation $n$. The arcs within each generation are disjoint, and the previous estimates imply
\begin{equation}\label{union}
w(G_n)\le\rho^n w(I)
\end{equation}
and 
$\langle w\rangle_Q\le4^n\langle w\rangle_I
\text{ for each generation-$n$ arc }Q.
$
Consequently,
$$
w\le2\cdot4^n\langle w\rangle_I
\quad
\text{almost everywhere on }G_n\setminus G_{n+1}.
$$
By inequality \eqref{union}, we deduce that the intersection $\bigcap_nG_n$ has $w$-measure zero. Because $w>0$ almost everywhere, it also has arc measure zero. The disjoint sets $G_n\setminus G_{n+1}$ therefore cover $I$ up to a null set. Therefore,
\begin{displaymath}
\begin{aligned}
\int_I w^{1+\delta}\,dm
&=
\sum_{n\ge0}\int_{G_n\setminus G_{n+1}}w^\delta w\,dm\\
&\le
\sum_{n\ge0}
(2\cdot4^n\langle w\rangle_I)^\delta w(G_n)\\
&\le
2^\delta\langle w\rangle_I^\delta w(I)
\sum_{n\ge0}(4^\delta\rho)^n.
\end{aligned}
\end{displaymath}
Our choice of $\delta$ gives $4^\delta\rho=\sqrt{\rho}<1$, so the last expression equals $D|I|\langle w\rangle_I^{1+\delta}$.

By applying the identical argument to $\sigma$ and multiplying the two reverse H\"older estimates,
$$
\langle w^{1+\delta}\rangle_I\langle \sigma^{{1+\delta}}\rangle_I
\le
D^2
\left(\langle w\rangle_I\langle \sigma\rangle_I \right)^{1+\delta}
\le
D^2[w]_{\mathcal{A}_2}^{1+\delta}.
$$
Taking $I=\mathbb{T}$, the same result also follows. 
\end{proof}

\begin{lemma}
\label{poisson-A2}
Let $w$ be an $\mathcal{A}_2$ weight with $[w]_{\mathcal{A}_2}<\infty$. Then
\begin{displaymath}
1\le Pw(z)P(w^{-1})(z)\le\frac{512}{3}[w]_{\mathcal{A}_2}^2,
\qquad
z\in\D.
\end{displaymath}
Conversely, if $w,w^{-1}\in L^1(\T)$ and the above Poisson product is bounded by $K$ on $\D$, then
\begin{displaymath}
[w]_{\mathcal{A}_2}\le(1+\pi^2)^2K.
\end{displaymath}
\end{lemma}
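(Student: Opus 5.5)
The lower bound $1\le Pw\,P(w^{-1})$ comes from Cauchy--Schwarz with respect to the probability measure $P_z\,dm$:
\[
1=\Big(\int_\T P_z\,w^{1/2}w^{-1/2}\,dm\Big)^2\le Pw(z)\,P(w^{-1})(z),
\]
since $\int_\T P_z\,dm=1$.

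For the upper bound I decompose the Poisson kernel dyadically. Fix $z=re^{i\theta}\in\D$ and set $h=\max(1-r,\,\cdot)$. More precisely, let $I_0$ be the arc centred at $e^{i\theta}$ with $|I_0|$ comparable to $1-r$; when $r$ is small, take $I_0=\T$. Let $I_k$ be the concentric arc with $|I_k|=\min(2^k|I_0|,1)$.

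On $I_0$ the kernel satisfies $P_z\le c_0/|I_0|$. On $I_k\setminus I_{k-1}$ we have $|\zeta-z|\gtrsim 2^k(1-r)$, so $P_z\le c\,4^{-k}/|I_0|$. This gives
\[
Pw(z)\le C\sum_{k\ge0}2^{-k}\langle w\rangle_{I_k},
\]
and the same bound holds for $P(w^{-1})(z)$. The sum stops at the first $k$ with $I_k=\T$.

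Multiplying the two sums yields the double sum
\[
\sum_{j,k}2^{-j-k}\langle w\rangle_{I_j}\langle w^{-1}\rangle_{I_k}.
\]
The cross terms need care. For $j\le k$ the arcs are nested, $I_j\subset I_k$, so
\[
\langle w\rangle_{I_j}\le\frac{|I_k|}{|I_j|}\,\langle w\rangle_{I_k}\le 2^{k-j}\langle w\rangle_{I_k}.
\]
Hence the $(j,k)$ term is at most $2^{-2j}\langle w\rangle_{I_k}\langle w^{-1}\rangle_{I_k}\le 4^{-j}[w]_{\mathcal A_2}$. Summing over $j\le k$ leaves $\sum_k 2^{-k}\cdot\frac43[w]_{\mathcal A_2}$, which is finite, and the case $j>k$ is symmetric.

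This already yields a bound linear in $[w]_{\mathcal A_2}$, which is stronger than the stated $[w]_{\mathcal A_2}^2$. The paper's constant $\frac{512}{3}[w]^2$ presumably comes from a cruder route, such as the doubling estimate $\langle w\rangle_{I_k}\le$ something from Lemma~\ref{self-improvement}. To match the stated constant, I would simply track $c_0,c$ explicitly: $P_z\le 4/|I_0|$-type bounds with factors of $2$ in the arc normalisation. If my constants come out smaller, that still proves the lemma, since $[w]_{\mathcal A_2}\ge1$.

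For the converse, fix an arc $I$ with centre $e^{i\theta}$ and normalised length $|I|=\ell\le1$. Take $z=(1-\pi\ell)^+e^{i\theta}$, or $z=0$ when $\ell$ is large. For $\zeta\in I$ the angular distance is at most $\pi\ell$ and $1-r\le\pi\ell$, so
\[
|\zeta-z|^2\le(1-r)^2+(\pi\ell)^2,
\]
using $|\zeta-z|^2=(1-r)^2+4r\sin^2(\cdot/2)$. Therefore
\[
P_z(\zeta)\ge\frac{1-r}{(1-r)^2+\pi^2\ell^2}\gtrsim\frac{1}{(1+\pi^2)\,\ell}
\]
after choosing $1-r=\ell$. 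This gives $\langle w\rangle_I\le(1+\pi^2)\,Pw(z)$, and the same holds for $w^{-1}$. Multiplying the two estimates gives $[w]_{\mathcal A_2}\le(1+\pi^2)^2K$.

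The case $\ell$ close to $1$, where $r=1-\ell$ approaches $0$, is handled by the same inequality with $z$ near the origin. The main obstacle is purely bookkeeping: choosing $1-r=\ell$ so that the factor is exactly $1+\pi^2$ (using $\sin x\le x$ and $1+r\le2$), and checking the endpoint cases $\ell\ge\frac12$, including $I=\T$.
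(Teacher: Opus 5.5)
Your lower bound and your converse are correct and are essentially the paper's argument. The lower bound is Cauchy--Schwarz against the probability measure $P_z\,dm$. For the converse, once you settle on $1-r=\ell$, you have $|\zeta-z|^2\le(1+\pi^2)\ell^2$ and $1-r^2\ge\ell$ on $I$; this in fact works for every $\ell\le 1$ without a separate large-arc case. The gap is in the upper bound, at the summation of the cross terms. You correctly show that for $j\le k$ the $(j,k)$ term is at most $4^{-j}[w]_{\mathcal{A}_2}$, but this bound does not decay in $k$. Summing it over $0\le j\le k\le N$ gives $[w]_{\mathcal{A}_2}\sum_{j=0}^{N}(N-j+1)4^{-j}\approx\frac{4}{3}N[w]_{\mathcal{A}_2}$, where $N\approx\log_2\frac{1}{1-|z|}$ is the number of dyadic arcs. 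The factor $2^{-k}$ in your ``$\sum_k 2^{-k}\cdot\frac43[w]_{\mathcal{A}_2}$'' is not there. Your estimate therefore grows logarithmically as $|z|\to1$ and gives no bound uniform on $\D$. The factor $2^{k-j}$ lost in the nesting inequality $\langle w\rangle_{I_j}\le 2^{k-j}\langle w\rangle_{I_k}$ cancels the entire decay in $k$ that the kernel estimate provided.

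No bookkeeping can repair this, because the linear bound you announce is false. Take $w(e^{it})=|t|^{\alpha}$ on $(-\pi,\pi]$. Then $[w]_{\mathcal{A}_2}\simeq(1-\alpha)^{-1}$ as $\alpha\to1$. For $z=r$ with $1-r$ sufficiently small (depending on $\alpha$), one finds $Pw(r)\gtrsim(1-r)^{\alpha}/(1-\alpha)$ and $P(w^{-1})(r)\gtrsim(1-r)^{-\alpha}/(1-\alpha)$. Hence $\sup_{\D}Pw\,P(w^{-1})\gtrsim[w]_{\mathcal{A}_2}^2$, so the quadratic order is sharp.

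The missing ingredient is the reverse doubling of $\mathcal{A}_2$ weights, which is what the paper uses. Cauchy--Schwarz and the $\mathcal{A}_2$ condition give $w(I_{k+1}\setminus I_k)\ge w(I_{k+1})/(4[w]_{\mathcal{A}_2})$. Hence $w(I_j)\le\rho^{k-j}w(I_k)$ for $j\le k$, with $\rho=1-\frac{1}{4[w]_{\mathcal{A}_2}}$, and likewise for $w^{-1}$. This yields $\langle w\rangle_{I_j}\langle w^{-1}\rangle_{I_k}\le[w]_{\mathcal{A}_2}(2\rho)^{|k-j|}$, so the $(j,k)$ term is at most $[w]_{\mathcal{A}_2}4^{-\min(j,k)}\rho^{|j-k|}$. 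The double sum is then at most $\frac43\cdot\frac{1+\rho}{1-\rho}[w]_{\mathcal{A}_2}\le\frac{32}{3}[w]_{\mathcal{A}_2}^2$. The second power of $[w]_{\mathcal{A}_2}$ comes from $\frac{1}{1-\rho}=4[w]_{\mathcal{A}_2}$. Multiplying by the factor $4$ from each of the two kernel estimates gives the stated $\frac{512}{3}[w]_{\mathcal{A}_2}^2$.
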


\begin{proof}
The measure $P_z\,dm$ is a probability measure. Cauchy--Schwarz with respect to it gives
$$
1=
\bigg(\int_\T w^{1/2}w^{-1/2}P_z\,dm\bigg)^2
\le
Pw(z)P(w^{-1})(z).
$$
For $|z|\le\frac{1}{2}$ one has $P_z\le3$. It follows that $Pw(z)P(w^{-1})(z)\le9[w]_{\mathcal{A}_2}$. Now let $z=re^{it_0}$ with $r>\frac{1}{2}$, and set $d=1-r$. Choose $N\ge1$ such that
$$
2^{-N-1}<d\le2^{-N}.
$$
For $0\le k\le N$, let $I_k$ be the arc centered at $e^{it_0}$ of normalized length $\ell_k=2^{k-N}$. Thus $I_N=\T$ and each arc $I_k=\{e^{i(t_0+2\pi t)}:|t|\le\frac{\ell_k}{2}\}$ is twice the length of the preceding one. 

For each point $\zeta=e^{i(t_0+2\pi t)}, -\frac{1}{2}\le t<\frac{1}{2}$ of the circle and since $z=re^{it_0}$ and $d=1-r$, we have
\begin{displaymath}
|\zeta-z|^2
=|e^{2\pi it}-r|^2
=d^2+4r\sin^2(\pi t).
\end{displaymath}
For $|t|\le\frac{1}{2}$, the inequality $\sin(\pi|t|)\ge2|t|$, together with $r>\frac{1}{2}$, gives $|\zeta-z|^2\ge d^2+8t^2$. 
Therefore, using $1-r^2\le2d$, we obtain
\begin{displaymath}
P_z(\zeta)
=\frac{1-r^2}{|\zeta-z|^2}
\le\frac{2d}{d^2+8t^2}.
\end{displaymath}

For $\zeta \in I_0$ we have $P_z(\zeta)\leq \frac{2}{d}<\frac{4}{\ell_0}$ by the choice of $N$ above. On $I_k\setminus I_{k-1}$, one has $|t|\ge\frac{\ell_k}{4}$, so
$$
P_z
\le
\frac{2d}{8t^2}
\le
\frac{4d}{\ell_k^2}
\le
\frac{4\cdot2^{-k}}{\ell_k}.
$$
Therefore,
\begin{equation}\label{poison}
Pg(z)=
\sum_{k=0}^{N}\int_{I_k\setminus I_{k-1}}P_z(\zeta)g(\zeta)dm(\zeta)
\le
4\sum_{k=0}^N2^{-k}\langle g\rangle_{I_k}
\end{equation}
for every nonnegative integrable function $g$.

Cauchy-Schwarz inequality gives 
$$
|I_{j+1}\setminus I_j|^2\le w(I_{j+1}\setminus I_j)\frac{[w]_{\mathcal{A}_2}|I_{j+1}|^2}{w(I_{j+1})}
$$
which implies that $w(I_{j+1}\setminus I_j)\ge \frac{1}{4A}$, since $|I_j|=|I_{j+1}\setminus I_j|=\frac{|I_{j+1}|}{2}$. Thus, 
$$
w(I_k)=w(I_{k+1})-w(I_{k+1}\setminus I_k) \le \rho w(I_{j+1})
$$
where $\rho$ is the same as in Lemma \ref{self-improvement}. Repeating this inequality,
$$
w(I_k)\le\rho^{l-k}w(I_l)
\qquad
(l\ge k).
$$
For $\sigma=w^{-1}$, it follows that
$$
\langle w\rangle_{I_k}\langle\sigma\rangle_{I_l}
\ \ \le \ \
\rho^{l-k}
\frac{w(I_l)\sigma(I_l)}{|I_k||I_l|}
\ \ \le \ \
[w]_{\mathcal{A}_2}\rho^{l-k}\frac{|I_l|}{|I_k|}
\ \  = \ \
[w]_{\mathcal{A}_2}(2\rho)^{l-k}.
$$
For $k\ge l$, use $\sigma(I_l)\le\rho^{k-l}\sigma(I_k)$ and estimate as before.

Inequality \eqref{poison} for $w$ and $\sigma$ combined with the estimates above gives
$$
Pw(z)P\sigma(z)
\le
16\sum_{k,l\ge0}^N 2^{-k-l}\langle w\rangle_{I_k}\langle\sigma\rangle_{I_l}
\le
16[w]_{\mathcal{A}_2}\sum_{k,l\ge0}
4^{-\min(k,l)}\rho^{|k-l|}.
$$
To evaluate the sum, separate the diagonal and the two off-diagonal regions:
$$
\sum_{k,l\ge0}4^{-\min(k,l)}\rho^{|k-l|}
=
\sum_{k\ge0}4^{-k}
+
2\sum_{k\ge0}\sum_{j\ge1}4^{-k}\rho^j
=
\frac{4}{3}\frac{1+\rho}{1-\rho}.
$$
Since $\frac{1+\rho}{1-\rho}\le8A$, we conclude that
$$
Pw(z)P\sigma(z)
\le
\frac{512}{3}[w]_{\mathcal{A}_2}^2
$$

For the converse, let $I=\{e^{i(t_I+2\pi t)}: |t|\leq\frac{\ell}{2}\}$ for  some $\ell<\frac{1}{2}$ with midpoint $e^{it_I}$ and set
\begin{displaymath}
z_I=(1-\ell)e^{it_I}.
\end{displaymath}
For each $\zeta\in I$,
$$
|\zeta-z_I|^2
=\ell^2+4(1-\ell)\sin^2(\pi t)
\le
\ell^2+4(1-\ell)\frac{\pi^2\ell^2}{4}
\le(1+\pi^2)\ell^2.
$$
Also, $1-|z_I|^2=2\ell-\ell^2\ge\ell$. Thus the Poisson kernel satisfies
$$
P_{z_I}(\zeta)
=\frac{1-|z_I|^2}{|\zeta-z_I|^2}
\ge\frac{1}{(1+\pi^2)\ell},
\qquad \zeta\in I.
$$

Integrating against $w$ and $w^{-1}$ gives
$$
\langle w\rangle_I\le(1+\pi^2)Pw(z_I),
\qquad
\langle w^{-1}\rangle_I\le(1+\pi^2)P(w^{-1})(z_I).
$$
Multiplying these inequalities and using the hypothesis that the Poisson product is bounded, we obtain
\begin{displaymath}
\langle w\rangle_I\langle w^{-1}\rangle_I
\le(1+\pi^2)^2Pw(z_I)P(w^{-1})(z_I)
\le(1+\pi^2)^2K.
\end{displaymath}

If $\ell>\frac{1}{2}$,
\begin{displaymath}
\langle w\rangle_I\langle w^{-1}\rangle_I
\le\frac{1}{\ell^2}
\bigg(\int_{\mathbb T}w\,dm\bigg)
\bigg(\int_{\mathbb T}w^{-1}\,dm\bigg)
\le4Pw(0)P(w^{-1})(0)
\le4K.
\end{displaymath}
This proves the required $\mathcal{A}_2$ bound for every arc.
\end{proof}

\begin{definition}
A continuous real function $B$ on an open set $\Omega\subset\mathbb C$ is superharmonic if it has the harmonic comparison property: on every disk compactly contained in $\Omega$, a harmonic function continuous on the closed disk and bounded above by $B$ on the boundary is bounded above by $B$ throughout the disk.
\end{definition}

We shall obtain this property from harmonic functions $S$ that touch $B$ from above. More precisely, 

\begin{definition}
A \emph{local harmonic upper support} touching $B$ at $z_0\in\Omega$ is a real-valued harmonic function $S$ on an open neighborhood $U\subset\Omega$ of $z_0$ satisfying
$$
B(z_0)=S(z_0)
\ \ \text{ and }\ \ 
B(z)\le S(z)\ \text{for every }z\in U.
$$
We say that $B$ has local harmonic upper support at $z_0$ if there is a local harmonic upper support touching $B$ at $z_0$. 
\end{definition}

\begin{lemma}\label{upper-supports}
Let $\Omega\subset\mathbb C$ be open and let $B:\Omega\to\mathbb R$ be continuous. Suppose that $B$ has a local harmonic upper support at every point of $\Omega$. Then $B$ is superharmonic. In particular, for every disk $D(c,R)$ with $\overline{D(c,R)}\subset\Omega$,
\begin{equation}\label{supermean}
\frac{1}{2\pi}\int_0^{2\pi}B(c+Re^{i\theta})\,d\theta\le B(c).
\end{equation}
\end{lemma}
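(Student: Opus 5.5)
We argue by contradiction using a perturbation and the maximum principle. Fix a disk $D=D(c,R)$ with $\overline D\subset\Omega$ and a harmonic function $h$ on $D$, continuous on $\overline D$, with $h\le B$ on $\partial D$. We must show $h\le B$ in $D$. Set $\varphi=h-B$, which is continuous on $\overline D$ and satisfies $\varphi\le 0$ on $\partial D$. To obtain strictness we perturb: for $\varepsilon>0$ let
$$
\varphi_\varepsilon(z)=h(z)-B(z)+\varepsilon\big(|z-c|^2-R^2\big).
$$
Then $\varphi_\varepsilon\le\varphi\le0$ on $\partial D$. Suppose $\varphi_\varepsilon$ attains a positive maximum over the compact set $\overline D$. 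This maximum must occur at an interior point $z_0\in D$.

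At $z_0$ we use the local harmonic upper support $S$ touching $B$ at $z_0$, defined on a neighborhood $U\subset D$ of $z_0$. On $U$ we have $-B\ge -S$, with equality at $z_0$. Hence the function
$$
\psi(z)=h(z)-S(z)+\varepsilon\big(|z-c|^2-R^2\big)
$$
satisfies $\psi\le\varphi_\varepsilon$ on $U$ and $\psi(z_0)=\varphi_\varepsilon(z_0)$, so $\psi$ also has a local maximum at $z_0$. However, $h-S$ is harmonic, so $\Delta\psi=4\varepsilon>0$ on $U$. A $C^2$ function cannot have a local maximum at a point where its Laplacian is strictly positive, because the Hessian would be negative semidefinite and its trace would be $\le0$. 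This contradiction shows $\varphi_\varepsilon\le0$ on $\overline D$. Letting $\varepsilon\to0$ gives $h\le B$ on $D$, which is the harmonic comparison property, so $B$ is superharmonic.

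For the mean-value inequality \eqref{supermean}, we take $h$ to be the Poisson integral in $D(c,R)$ of the continuous boundary data $B|_{\partial D(c,R)}$. Then $h$ is harmonic in the disk, continuous on its closure, and equal to $B$ on the boundary. Comparison gives $h(c)\le B(c)$, and the mean value property gives $h(c)=\frac{1}{2\pi}\int_0^{2\pi}B(c+Re^{i\theta})\,d\theta$.

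The only delicate point is the need for strictness. With $\varepsilon=0$ the local maximum of $h-S$ at $z_0$ is not by itself contradictory, since a harmonic function may be locally constant. The quadratic perturbation avoids this, and it works because $S$ needs only to exist locally, with no uniformity in the neighborhood $U$.
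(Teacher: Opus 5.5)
Your proof is correct and follows the paper's argument. You use the same quadratic perturbation $\varepsilon(|z-c|^2-R^2)$, locate an interior positive maximum, transfer it via the touching support $S$ to the function $h-S+\varepsilon(|z-c|^2-R^2)$, and use the Poisson-integral comparison for \eqref{supermean}. The only cosmetic differences are that you reach the contradiction through $\Delta\psi=4\varepsilon>0$ (the second-derivative test) where the paper averages over a small circle to obtain $(q_\varepsilon-S)(z_0)+\varepsilon r^2$, and that you prove $\varphi_\varepsilon\le0$ for every $\varepsilon$ and then let $\varepsilon\to0$, rather than fixing one small $\varepsilon$ after assuming $q>B$ somewhere.
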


\begin{proof}
Fix $\overline{D(c,R)}\subset\Omega$. Let $q$ be continuous on the closed disk, harmonic in its interior, and satisfy $q\le B$ on the boundary. We will prove that $q\le B$ throughout $D(c,R)$, which would imply that $B$ is superharmonic, by contradiction.

To this end, suppose that $q>B$ at an interior point. For sufficiently small $\varepsilon>0$, the function
\begin{displaymath}
q_\varepsilon(z)=q(z)+\varepsilon(|z-c|^2-R^2)
\end{displaymath}
still exceeds $B$ somewhere and agrees with $q$ on the boundary. Consequently, $q_\varepsilon-B$ attains a positive maximum at an interior point $z_0$.

Let $S$ be a harmonic upper support touching $B$ at $z_0$. Near that point,
$$
q_\varepsilon-S
\le
q_\varepsilon-B
\le
q_\varepsilon(z_0)-B(z_0)
=
q_\varepsilon(z_0)-S(z_0).
$$
Hence $q_\varepsilon-S$ has a local maximum at $z_0$. Choose $r>0$ sufficiently small so that $q-S$ is harmonic on a neighborhood of $\overline{D(z_0,r)}$. The local maximum property for $q_\varepsilon-S$ gives
$$
(q_\varepsilon-S)(z_0+re^{i\theta})
\le(q_\varepsilon-S)(z_0),
\qquad 0\le\theta\le2\pi.
$$
By the mean-value property for harmonic functions,
\begin{displaymath}
\frac{1}{2\pi}\int_0^{2\pi}
(q-S)(z_0+re^{i\theta})\,d\theta
=(q-S)(z_0).
\end{displaymath}
Moreover,
\begin{displaymath}
\frac{1}{2\pi}\int_0^{2\pi}
|z_0+re^{i\theta}-c|^2\,d\theta
=|z_0-c|^2+r^2.
\end{displaymath}
Since $q_\varepsilon(z)=q(z)+\varepsilon(|z-c|^2-R^2)$, these identities imply
\begin{displaymath}
\frac{1}{2\pi}\int_0^{2\pi}
(q_\varepsilon-S)(z_0+re^{i\theta})\,d\theta
=(q_\varepsilon-S)(z_0)+\varepsilon r^2 > (q_\varepsilon-S)(z_0),
\end{displaymath}
which is a contradiction to the local maximum property for $q_\varepsilon-S$ whic implies that the latter integrand is bounded above by $(q_\varepsilon-S)(z_0)$ at every point of the circle. 

It follows that $q\le B$ throughout $D(c,R)$. Since the disk and the harmonic function $q$ were arbitrary, $B$ has the harmonic comparison property and is therefore superharmonic.

To obtain inequality \eqref{supermean}, let $q$ be the harmonic extension of the continuous boundary values of $B$ on $\partial D(c,R)$, given by the Poisson formula. Then $q$ is continuous on $\overline{D(c,R)}$, harmonic in $D(c,R)$, and satisfies $q=B$ on the boundary. The mean-value property yields
\begin{displaymath}
q(c)
=\frac{1}{2\pi}\int_0^{2\pi}q(c+Re^{i\theta})\,d\theta
=\frac{1}{2\pi}\int_0^{2\pi}B(c+Re^{i\theta})\,d\theta.
\end{displaymath}
Consequently, since $q(c)\le B(c)$, 
\begin{displaymath}
\frac{1}{2\pi}\int_0^{2\pi}B(c+Re^{i\theta})\,d\theta\le B(c),
\end{displaymath}
as claimed.
\end{proof}

For $R>0$ and $\zeta=p+iq$ with $p>0$, define
\begin{displaymath}
\mathcal E(R;\zeta)
=
p\log\frac{R}{p}
+
\Real(\zeta\log\zeta).
\end{displaymath}
The logarithm here is the holomorphic branch on the right half-plane with argument in $(-\frac{\pi}{2},\frac{\pi}{2})$.

\begin{lemma}
\label{entropy-lemma}
If $R$ is positive harmonic and $Z=p+iq$ is holomorphic with $p=\Real Z>0$, then $\mathcal E(R;Z)$ has a local harmonic upper support at every point and is superharmonic. Moreover,
\begin{equation}\label{pointwise estimates}
p\log R-\frac{\pi}{2}|q|
\le
\mathcal E(R;p+iq)
\le
p\log R.
\end{equation}
The continuous extension to the imaginary axis satisfies
\begin{displaymath}
\mathcal E(R;iq)=-\frac{\pi}{2}|q| \ \
\text{ and }\ \ 
\mathcal E(R;0)=0.
\end{displaymath}
\end{lemma}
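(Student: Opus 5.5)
The plan is to reduce everything to an explicit formula for $\mathcal E$ in polar coordinates, and then to split $\mathcal E$ into a jointly concave, $1$-homogeneous part plus a harmonic part.

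\textbf{Superharmonicity.} Write
$$
\mathcal E(R;Z)=\Phi(R,p)+\Real(Z\log Z),
\qquad
\Phi(r,p)=p\log\frac{r}{p}.
$$
\begin{itemize}
\item The term $\Real(Z\log Z)$ is harmonic, because $Z\log Z$ is holomorphic wherever $\Real Z>0$ (the branch of $\log$ is holomorphic on the right half-plane).
\item The function $\Phi$ is the perspective of the concave function $\log$. Hence it is jointly concave on $(0,\infty)^2$ and positively homogeneous of degree $1$.
\item At a point $(r_0,p_0)$ its gradient is $\bigl(p_0/r_0,\ \log(r_0/p_0)-1\bigr)$. By homogeneity (Euler's identity), the tangent plane at $(r_0,p_0)$ passes through the origin.
\item Concavity then gives, for all $r,p>0$,
$$
\Phi(r,p)\le\frac{p_0}{r_0}\,r+\Bigl(\log\frac{r_0}{p_0}-1\Bigr)p,
$$
with equality at $(r_0,p_0)$.
\end{itemize}

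Fix $z_0$ and put $r_0=R(z_0)$, $p_0=p(z_0)$. Define
$$
S=\frac{p_0}{r_0}\,R+\Bigl(\log\frac{r_0}{p_0}-1\Bigr)p+\Real(Z\log Z).
$$
This $S$ is a fixed linear combination of the harmonic functions $R$, $p=\Real Z$ and $\Real(Z\log Z)$, so it is harmonic. It satisfies $S\ge\mathcal E(R;Z)$ everywhere, with equality at $z_0$. So $S$ is a local (in fact global) harmonic upper support at $z_0$.

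The function $\mathcal E(R;Z)$ is continuous, as a composition of continuous functions with $R>0$ and $p>0$. Lemma~\ref{upper-supports} then gives superharmonicity.

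\textbf{Pointwise bounds.} Write $Z=\rho e^{i\theta}$ with $|\theta|<\pi/2$, so $p=\rho\cos\theta$ and $q=\rho\sin\theta$. Then $\Real(Z\log Z)=p\log\rho-q\theta$, and therefore
$$
\mathcal E=p\log R+p\log\frac{\rho}{p}-q\theta .
$$
\begin{itemize}
\item \emph{Lower bound.} Since $\rho\ge p$, the middle term is nonnegative. Also $q\theta=|q||\theta|\le\frac{\pi}{2}|q|$. Together these give $\mathcal E\ge p\log R-\frac{\pi}{2}|q|$.
\item \emph{Upper bound.} We must show $p\log\sec\theta\le q\theta$. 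Dividing by $\rho$, this is $\cos\theta\log\sec\theta\le\theta\sin\theta$.
\end{itemize}

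The upper bound is the only step that needs any care. Use $\log x\le x-1$ to get
$$
\cos\theta\log\sec\theta\le 1-\cos\theta=2\sin^2(\theta/2).
$$
Also $\theta\sin\theta=2\theta\sin(\theta/2)\cos(\theta/2)$. So it suffices to show $|\tan(\theta/2)|\le|\theta|$ for $|\theta|<\pi/2$. This holds because $\tan$ is convex on $[0,\pi/4]$ and $\tan(\pi/4)=1\le\pi/2$, which gives $\tan x\le 2x$ on $[0,\pi/4]$; take $x=|\theta|/2$.

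\textbf{Boundary values.} Let $p\to0^+$ with $R$ held in a compact subset of $(0,\infty)$. Then $p\log(R/p)\to0$. Continuity of $\zeta\log\zeta$ up to the imaginary axis, using the limiting arguments $\pm\pi/2$, gives
$$
\Real(iq\log(iq))=\Real\bigl(iq(\log|q|\pm i\tfrac{\pi}{2})\bigr)=\mp\frac{\pi}{2}q=-\frac{\pi}{2}|q|.
$$
At $\zeta=0$ both terms tend to $0$, since $|\zeta\log\zeta|\to0$.
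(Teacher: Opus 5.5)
Your proposal is correct and follows the paper's proof essentially step by step. Your tangent-plane inequality for the perspective function $p\log(r/p)$ is exactly the paper's use of $\log t\le t-1$ with $t=p_0R/(R_0p)$, and it gives the same harmonic support $S$; the lower bound and the boundary values are handled as in the paper. The only genuine variation is the upper bound: you prove $\cos\theta\log\sec\theta\le\theta\sin\theta$ via $\log x\le x-1$ and $\tan x\le 2x$ on $[0,\pi/4]$, whereas the paper proves the equivalent inequality $\psi(t)=\tfrac12\log(1+t^2)-t\arctan t\le 0$ (take $t=\tan\theta$) from $\psi'(t)=-\arctan t$, which is a bit shorter, but both arguments are valid.
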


\begin{proof}
Fix a point $z_0$, and set $R_0=R(z_0)$ and $p_0=\Real Z(z_0)$. Both are positive. The elementary inequality $\log t\le t-1$ to $t=\dfrac{p_0R}{R_0p}$ gives, after multiplication by $p$,
$$
p\log\frac{R}{p}
\le
\frac{p_0}{R_0}R
+
\bigg(\log\frac{R_0}{p_0}-1\bigg)p.
$$
Equality holds at $z_0$. Therefore, adding $\Real(Z\log\ Z)$, we get that $\mathcal E(R(z);Z(z))\le S(z)$, where
\begin{displaymath}
S=
\frac{p_0}{R_0}R
+
\bigg(\log\frac{R_0}{p_0}-1\bigg)\Real Z
+
\Real(Z\log Z)
\end{displaymath}
is harmonic. Indeed, its coefficients are fixed real numbers, $R$ and $\operatorname{Re}Z$ are harmonic, and $\operatorname{Re}(Z\log Z)$ is harmonic because $Z\log Z$ is holomorphic. Note also that
$$
\mathcal E(R(z_0);Z(z_0))=S(z_0).
$$
Thus $S$ is a local harmonic upper support for $\mathcal E(R;Z)$ at $z_0$. Since $z_0$ was arbitrary and $\mathcal E(R;Z)$ is continuous, Lemma \ref{upper-supports} implies that $\mathcal E(R;Z)$ is superharmonic.

To establish the inequalities \eqref{pointwise estimates}, expansion in real coordinates gives
\begin{equation}\label{epsilon}
\mathcal E(R;p+iq)-p\log R
=
\frac{p}{2}\log\bigg(1+\frac{q^2}{p^2}\bigg)
-
q\arctan\frac{q}{p}.
\end{equation}
The logarithmic term is nonnegative, and
$|\arctan(\frac{q}{p})|\le \frac{\pi}{2}$. This proves the lower bound.

For the upper bound, define
\begin{displaymath}
\psi(t)=\frac{1}{2}\log(1+t^2)-t\arctan t.
\end{displaymath}
This function is even, $\psi(0)=0$, and $\psi'(t)=-\arctan t\le0$ for $t\ge0$. Hence $\psi(t)\le0$ for all real $t$, and the right-side of \eqref{epsilon} equals $p\psi(\frac{q}{p})\le0$.

To check the boundary extension, let $R$ stay in a compact subset of $(0,\infty)$, as it does locally when $R$ is positive harmonic. First, let $p$ decrease to zero while $q$ tends to a fixed $q_0\ne0$. Then
\begin{displaymath}
p\log\bigg(1+\frac{q^2}{p^2}\bigg)\longrightarrow0
\ \ \text{ and }\ \ 
q\arctan\frac{q}{p}\longrightarrow \frac{\pi}{2}|q_0|.
\end{displaymath}
Thus $\mathcal E(R;p+iq)$ tends to $-\frac{\pi}{2}|q_0|$. At the origin, \eqref{pointwise estimates} yields
$$
|\mathcal E(R;p+iq)|
\le
p|\log R|+\frac{\pi}{2}|q|,
$$
which tends to zero locally uniformly when $R$ stays bounded above and away from zero. The extension is therefore continuous, including at the origin.
\end{proof}

\begin{lemma}\label{gluing}
Let $U,V$ be positive harmonic functions and $h$ a real harmonic function on $\D$ satisfying
$$
-\log V\le h\le\log U
\text{ and }\ 
1\le UV\le K
$$ for a finite constant $K$.
For every $\eta>0$ there is $M=M(K,\eta)>0$ such that, for every holomorphic $F(z)=x(z)+iy(z)$ on $\D$, there is a continuous superharmonic function $B$ with
\begin{equation}\label{lower bnd}
B\ge hx-M|x|-(\frac{\pi}{2}+\eta)|y|,
\end{equation}
with $B(z)=0$ whenever $F(z)=0$.

Here $M$ is independent of $U,V,h,F$ and depends only on $K$ and $\eta$.
\end{lemma}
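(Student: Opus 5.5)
The plan is to build $B$ from the entropy functions of Lemma \ref{entropy-lemma}: one piece on $\{x>0\}$ built from $U$, one piece on $\{x<0\}$ built from $V$, glued along $\{x=0\}$. Then I subtract the superharmonic correction $M|x|+\eta|y|$, which is where the constants $M$ and $\eta$ come from. The main obstacle is the gluing across $\{x=0\}$.

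\emph{The two pieces.} On $\Omega_+=\{z\in\D: x(z)>0\}$ set $B_+=\mathcal E(U;F)$. On $\Omega_-=\{x<0\}$ set $B_-=\mathcal E(V;-F)$; this is legitimate because $-F$ is holomorphic with $\Real(-F)=-x>0$ there. By Lemma \ref{entropy-lemma}, each piece is superharmonic on its open set and has local harmonic upper supports there. Its lower bound \eqref{pointwise estimates}, combined with $h\le\log U$, gives on $\Omega_+$
\[
B_+\ge x\log U-\tfrac{\pi}{2}|y|\ge hx-\tfrac{\pi}{2}|y|.
\]
On $\Omega_-$ it gives $B_-\ge |x|\log V-\frac{\pi}{2}|y|$. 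Since $x<0$ there, $h\ge-\log V$ yields $hx\le|x|\log V$, so $B_-\ge hx-\frac{\pi}{2}|y|$. Both pieces extend continuously to $\{x=0\}$ with the common value $-\frac{\pi}{2}|y|$, and this value is $0$ where $F=0$. Call the glued continuous function $B_0$.

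\emph{The correction.} Set
\[
B=B_0-M|x|-\eta|y|.
\]
Since $|x|=|\Real F|$ and $|y|=|\Imag F|$ are subharmonic, the correction terms are superharmonic. Also $B\ge B_0-M|x|-\eta|y|\ge hx-M|x|-(\frac{\pi}{2}+\eta)|y|$, and $B=0$ at zeros of $F$. So everything reduces to showing that $B$ (not necessarily $B_0$ alone) has local harmonic upper supports at points of $\{x=0\}$, and then invoking Lemma \ref{upper-supports}.

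\emph{The gluing.} Near the imaginary axis, $p\log(R/p)$ has slope $\to+\infty$ as $p\downarrow0$. So $B_0$ has an upward cusp, like $-|x|\log|x|$, on both sides of the axis. This is concave in the transverse direction, which favours superharmonicity, but it means no smooth harmonic function touches $B_0$ from above there.

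I would therefore not use pointwise supports on the axis. Instead I would realise $B$ near each axis point as a minimum of two functions defined on a full neighbourhood, each superharmonic there, since a minimum of superharmonic functions is superharmonic. Concretely, using the two-sided bounds $-\log V\le h\le\log U$ with $UV\le K$ (so $0\le\log U+\log V\le\log K$), I would compare $B_+$ with the harmonic-plus-superharmonic expression obtained by replacing $x\log U$ by $x\log U-M x$ with $M\ge \log K+1$. Its continuation across $x=0$ I would write using $\Real(F\log F)$ on a slit neighbourhood where $y\ne0$, which is harmonic there, and in which the branch of $\arg F$ near $\pm\frac{\pi}{2}$ is harmless.

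The choice of $M$ should make the continued $+$ expression exceed $B_-$ on the $x<0$ side, and symmetrically, so that $B$ agrees with the minimum. I would first try this at axis points with $y\ne0$. At points where $F=0$, I would separately use the bound $|B_0|\le |x||\log R|+\frac{\pi}{2}|y|$ together with the strictly larger coefficient $\frac{\pi}{2}+\eta$. This lets $-\eta|y|-M|x|$ absorb the error, and I expect the checks to reduce to a direct super-mean-value inequality on small circles, which also suffices for superharmonicity of a continuous function. This verification, uniformly in $U,V,h,F$ with constants depending only on $K$ and $\eta$, is the step I expect to be hardest.
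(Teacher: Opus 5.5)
\textbf{There is a genuine gap.} It sits at the step everything reduces to: $B=B_0-M|x|-\eta|y|$ is not superharmonic at points of $\{x=0,\ y\neq0\}$, whatever $M$ is. Your claim that the cusp is transversally concave and therefore favours superharmonicity is where the argument breaks. By \eqref{epsilon}, near $z_0$ with $x(z_0)=0\neq y(z_0)$ one has, on both sides, $B_0=-\frac{\pi}{2}|y|+|x|\log\frac{1}{|x|}+O(|x|)$. Each one-sided piece is concave. Along a transversal, however, the axis is a strict local minimum with one-sided slopes $\mp\infty$, i.e.\ a convex kink.

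A concrete instance: take $U\equiv V\equiv1$, $h\equiv0$, $F(z)=z$, $z_0=\frac{i}{2}$. Near $z_0$ the term $|y|$ is harmonic, and $-M|x|$ contributes only $-\frac{2M}{\pi}r$ to the mean over $|z-z_0|=r$. So this mean exceeds $B(z_0)$ by $\frac{2r}{\pi}\bigl(\log\frac1r-M\bigr)+O(r)>0$ for small $r$. Hence $B$ has no harmonic upper support at $z_0$ and cannot locally be a minimum of superharmonic functions, so the strategy of your last paragraph cannot succeed. A warning sign is that $\eta$ does no work in your construction, since $-\eta|y|$ is harmonic off $\{y=0\}$.

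Your $B_0-M|x|$ is precisely the paper's function $O$. The missing idea is to replace it near the axis by something strictly smaller that is superharmonic \emph{across} the axis; this is what the extra $\eta|y|$ pays for. The paper works in the thin cone $\mathcal C=\{|x|<2\varepsilon|y|\}$ with $Q=\mathcal E(U;Z_+)+\mathcal E(V;Z_-)-\eta|y|$, where $Z_\pm=(\pm\frac12-i\tau\varepsilon)F$ and $\tau=\sgn y$.
\begin{itemize}
\item Both real parts $p_\pm=\pm\frac{x}{2}+\varepsilon|y|$ stay positive across the axis, so Lemma \ref{entropy-lemma} supplies harmonic upper supports for $Q$ there.
\item The identities $p_+-p_-=x$ and $|q_+|+|q_-|=|y|$ give $Q\ge xh-(\frac\pi2+\eta)|y|$.
\item On the axis, $Q-O=|y|\bigl(\varepsilon\log(UV)+d_\varepsilon-\eta\bigr)<0$ once $\varepsilon\log K+d_\varepsilon<\eta$, where $d_\varepsilon=\arctan(2\varepsilon)+\varepsilon\log(1+\frac{1}{4\varepsilon^2})\to0$.
\item On the cone edges, $Q-O\ge(2\varepsilon M-\frac\pi2-\eta)|y|>0$ once $M>\frac{\pi/2+\eta}{2\varepsilon}$.
\end{itemize}
Then $B=\min\{O,Q\}$ on $\mathcal C$ and $B=O$ elsewhere. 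Note that $M$ is then of order $1/\varepsilon$ and depends on $\eta$, unlike your $\eta$-independent choice $M\ge\log K+1$.

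Finally, at zeros of $F$ your bound $|B_0|\le|x||\log R|+\frac\pi2|y|$ would need $M$ to dominate $\log U$ and $\log V$ near $z_0$, which $K$ does not control. The uniform argument uses the tilted support $c\Real F$ with $c=\frac12\bigl(\log U(z_0)-\log V(z_0)\bigr)$. This works as soon as $M>\frac12\log K$, because $\log U+\log V\le\log K$.
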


{\it{Remark:} }
The two harmonic functions $U$ and $V$ play complementary roles. An expression involving $U$ gives the correct lower bound on the side $\Real F>0$, while one involving $V$ does the same on the side $\Real F<0$. These expressions have matching values on the imaginary axis, but matching values alone do not establish superharmonicity across that axis. We therefore introduce a second expression in a narrow cone around it. The value $-\frac{\pi}{2}|q|$ on the imaginary axis is responsible for the coefficient $\frac{\pi}{2}$ in the later conjugate-function estimate. The construction below establishes the particular signed estimate needed here.

\begin{proof}
Choose $0<\varepsilon<\frac{1}{4}$ so small that $\varepsilon\log K+d_\varepsilon<\eta$, where
$$
d_\varepsilon=
\arctan(2\varepsilon)
+
\varepsilon\log\bigg(1+\frac{1}{4\varepsilon^2}\bigg).
$$
Note that $d_\varepsilon \to 0$ as $\varepsilon\to 0$. Set
$$
M=\frac{\frac{\pi}{2}+\eta+1}{2\varepsilon}.
$$
Since  $\varepsilon\log K<\eta<\frac{\pi}{2}+\eta+1$,
$$
M>\frac{\log K}{2}.
$$
Define the following expression on the entire disk by
$$
O(z)=
\begin{cases}
\mathcal E(U;F)-Mx(z),&\text{on } \{z\in\mathbb{D}:x(z)>0\},\\
\mathcal E(V;-F)+Mx,&\text{on } \{z\in\mathbb{D}:x(z)<0\},\\
-\frac{\pi}{2}|y(z)|,&\text{on } \{z\in\mathbb{D}:x(z)=0\}.
\end{cases}
$$
The last line includes the value zero at the zeros of $F$. Lemma \ref{entropy-lemma} shows that $O$ is continuous. On each open set $\{z\in\mathbb{D}:x(z)>0\}$ or $\{z\in\mathbb{D}:x(z)<0\}$, $O$ has local harmonic upper supports.

Furthermore, on $\{z\in\mathbb{D}:x(z)>0\}$, the lower bound of \eqref{pointwise estimates} and $h\le\log U$ give
\begin{equation}\label{lower O_1}
O\ge x\log U-Mx-\frac{\pi}{2}|y|
\ge xh-M|x|-\frac{\pi}{2}|y|.
\end{equation}
On $\{z\in\mathbb{D}:x(z)<0\}$, the real part of $-F$ is $-x>0$. Since $\log V\ge-h$, 
\begin{equation}\label{lower O_2}
O\ge(-x)\log V+Mx-\frac{\pi}{2}|y|
\ge xh-M|x|-\frac{\pi}{2}|y|.
\end{equation}
The same lower bound is immediate on $\{z\in\mathbb{D}:x(z)=0\}$.

Now, consider the open cone
$$
\mathcal C=\{z\in\D:|x(z)|<2\varepsilon|y(z)|\}.
$$
Here $y$ cannot vanish, so $\tau=\sgn(y)$ is locally constant by continuity. The functions
$$
Z_+=\bigg(\frac{1}{2}-i\tau\varepsilon\bigg)F
\ \ \text{ and }\ \ 
Z_-=\bigg(-\frac{1}{2}-i\tau\varepsilon\bigg)F
$$
are therefore holomorphic on each component of the cone. Write $Z_\pm=p_\pm+iq_\pm$. Their coordinates are
$$
\begin{aligned}
p_+&=\frac{x}{2}+\varepsilon|y|,
&
p_-&=-\frac{x}{2}+\varepsilon|y|,\\
q_+&=\frac{y}{2}-\tau\varepsilon x,
&
q_-&=-\frac{y}{2}-\tau\varepsilon x.
\end{aligned}
$$
The cone condition gives $p_+,p_->0$ and $p_+-p_-=x$. Moreover, since $0<\varepsilon<\frac{1}{4}$,
$$
|\varepsilon x|<2\varepsilon^2|y|<\frac{|y|}{2}.
$$
Consequently $q_+$ has the sign of $y$, $q_-$ has the opposite sign, and
$$
|q_+|+|q_-|=|y|.
$$

Define
\begin{displaymath}
Q=\mathcal E(U;Z_+)+\mathcal E(V;Z_-)-\eta|y|
\quad
\text{on }\mathcal C.
\end{displaymath}
Since $y$ has constant sign locally, $|y|$ is harmonic there. This combined with Lemma \ref{entropy-lemma}, there is a touching harmonic upper support for $Q$. Inequality \eqref{pointwise estimates} gives,
$$
Q
\ge
p_+\log U+p_-\log V-(\frac{\pi}{2}+\eta)|y|.
$$
Recall that $p_+-p_-=x$, so
$$
p_+\log U+p_-\log V-xh
=
p_+(\log U-h)+p_-(\log V+h)  \ge0.
$$
thus, using the hypothesis that $-\log V\le h\le\log U$, it follows that
\begin{equation}\label{lower Q}
Q\ge xh-(\frac{\pi}{2}+\eta)|y|.
\end{equation}
Now for a point where $x(z)=0$ and $t=|y(z)|>0$, one has
$$
Z_+=\varepsilon t+i\tau\frac{t}{2},
\ \ \ \
Z_-=\varepsilon t-i\tau\frac{t}{2},
\ \ \text{and }\ \
O=-\frac{\pi}{2}t.
$$
Therefore, formula \eqref{epsilon} gives (using $\frac{\pi}{2}-\arctan\frac{1}{2\varepsilon}
=
\arctan(2\varepsilon)$ and that $UV\le K$)
$$
\begin{aligned}
Q-O
&=
t\varepsilon\log(UV)
+
t\varepsilon\log\bigg(1+\frac{1}{4\varepsilon^2}\bigg)
+
t\bigg(\frac{\pi}{2}-\arctan\frac{1}{2\varepsilon}-\eta\bigg)\\
&=
t(\varepsilon\log(UV)+d_\varepsilon-\eta)
<0.
\end{aligned}
$$
Hence $Q<O$ on $\{z\in\mathbb{D}:x(z)=0, |y(z)|>0\}$.

At an edge of the cone, $|x(x)|=2\varepsilon|y(z)|>0$, and one of $p_+,p_-$ equals zero while the other equals $|x|$. Put $R_x=U$ for $x>0$ and $R_x=V$ for $x<0$. Inequality \eqref{pointwise estimates}, including their boundary values, give
$$
Q\ge|x|\log R_x-(\frac{\pi}{2}+\eta)|y|
\ \ \text{ and } \ \
O\le|x|\log R_x-M|x|.
$$
Thus
$$
Q-O
\ge
(2\varepsilon M-\frac{\pi}{2}-\eta)|y|
=
|y|
>
0.
$$
That is, $Q>O$ on the cone edges.

Define 
$$
B=
\begin{cases}
\min\{O,Q\},&z\in\mathcal C,\\
O,&z\notin\mathcal C.
\end{cases}
$$
A zero of $F$ lies outside the open cone, and the definition of $O$ gives $B=0$ there.

Near a nonzero point of $F$ with $x(z)=0$, continuity and $Q<O$ imply $B=Q$. Near a nonzero cone-edge point, continuity and $O<Q$ imply $B=O$, including on the portion of its neighborhood inside the cone. At every other nonzero point, $B$ is locally either $O$ (if $z\notin\mathcal{C}$) or the minimum of $O$ and $Q$ (if $z\in\mathcal{C}$). This proves continuity away from zeros of $F$.

These local descriptions also supply harmonic upper supports away from zeros. For a minimum, choose a branch attaining the minimum at the point and take a touching upper support for that branch. The support dominates the minimum throughout a neighborhood and touches it at the chosen point.

The lower estimates \eqref{lower O_1}, \eqref{lower O_2}, \eqref{lower Q} for $O$ and $Q$ imply
$$
B\ge hx-M|x|-(\frac{\pi}{2}+\eta)|y|,
$$
while \eqref{pointwise estimates} gives,
\begin{equation}\label{ineq for B}
B\le O\le |x|(|\log U|+|\log V|).
\end{equation}
Near a zero $z_0$ of $F$, the functions $h,\log U,\log V$ are bounded. As $z$ tends to $z_0$, squeeze theorem implies that $\lim\limits_{z\to z_0}B(z)=0=B(z_0)$. Hence $B$ is continuous at every zero of $F$.

To show that $B$ is superharmonic on the set of zeros of $F$, define
$$
c=\frac{\log U(z_0)-\log V(z_0)}{2}.
$$
Because $M>\frac{\log K}{2}$ and $U(z_0)V(z_0)\le K$,
\begin{displaymath}
\log U(z_0)-M<c<M-\log V(z_0).
\end{displaymath}
By continuity, the same strict inequalities hold in a neighborhood of $z_0$. If for some $z$, $x(z)>0$ in that neighborhood, the definition of $O$ and \eqref{ineq for B} give
$$
B\le O\le x(\log U-M)\le cx, 
$$
while if $x(z)<0$,
$$
B\le O\le x(M-\log V)\le cx.
$$
Finally, on $\{z\in\mathbb{D}:x(z)=0\}$, the inequality is $B\le O=-\frac{\pi}{2}|y|\le0=cx$. Thus the harmonic function $c\Real F$ dominates $B$ near $z_0$ and equals $B$ at $z_0$.

Therefore, we have proved continuity and constructed a touching harmonic upper support at every point. Lemma \ref{upper-supports} gives superharmonicity on all of $\D$. 
\end{proof}

Let's now build the last tool before we prove Theorem \ref{main}.

\begin{proposition}
\label{graph-estimate}
Let $w\in \mathcal{A}_2(\T)$. For every $\eta>0$ there is a finite $M$, depending only on $[w]_{\mathcal{A}_2}$ and $\eta$, such that
\begin{equation}\label{basic ineq}
\bigg|\int_\T f\log w\,dm\bigg|
\le
M\|f\|_1+
\bigg(\frac{\pi}{2}+\eta\bigg)\|Hf\|_1
\end{equation}
for every real mean-zero trigonometric polynomial $f$.
\end{proposition}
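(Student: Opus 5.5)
The plan is to apply Lemma \ref{gluing} with the Poisson extensions of the weight and its reciprocal, and then compare the superharmonic function $B$ at the origin with its circle averages as $r\to1$. Set $U=Pw$, $V=P(w^{-1})$ and $h=P(\log w)$. By Lemma \ref{poisson-A2}, $1\le UV\le K:=\frac{512}{3}[w]_{\mathcal{A}_2}^2$. Jensen's inequality for the probability measure $P_z\,dm$ gives $h=P(\log w)\le\log Pw=\log U$ and $-h=P(\log w^{-1})\le\log V$. Thus the hypotheses of Lemma \ref{gluing} hold, and $M=M(K,\eta)$ depends only on $[w]_{\mathcal{A}_2}$ and $\eta$. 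Note that $\log w\in L^1$ since $w,w^{-1}\in L^1$.

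Given a real mean-zero trigonometric polynomial $f$, take $F=P f+i\,P(Hf)$. This is a holomorphic polynomial in $z$, $\bar z$-free, with $F(0)=\widehat f(0)=0$, and its boundary values are $f+iHf$. Let $B$ be the function from Lemma \ref{gluing}. Since $F(0)=0$, we have $B(0)=0$, and \eqref{supermean} on $D(0,r)$ with $r<1$ gives
$$
0=B(0)\ge\int_\T B(r\zeta)\,dm(\zeta)\ge\int_\T\Big(h(r\zeta)x(r\zeta)-M|x(r\zeta)|-\big(\tfrac{\pi}{2}+\eta\big)|y(r\zeta)|\Big)dm(\zeta),
$$
by \eqref{lower bnd}. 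We then let $r\to1$. Because $f$ and $Hf$ are trigonometric polynomials, $x(r\cdot)\to f$ and $y(r\cdot)\to Hf$ uniformly, so the last two terms converge to $M\|f\|_1+(\frac{\pi}{2}+\eta)\|Hf\|_1$. For the first term, $h(r\cdot)=P_r*\log w\to\log w$ in $L^1$, and $x(r\cdot)$ is uniformly bounded and converges uniformly. Alternatively, Parseval gives $\int h(r\zeta)x(r\zeta)\,dm=\sum r^{2|n|}\widehat{\log w}(n)\overline{\widehat f(n)}$, a finite sum that converges to $\int f\log w\,dm$. This yields
$$
\int_\T f\log w\,dm\le M\|f\|_1+\big(\tfrac{\pi}{2}+\eta\big)\|Hf\|_1.
$$

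For the absolute value, apply the same inequality to $-f$, which is again a real mean-zero trigonometric polynomial, and note that $H(-f)=-Hf$ has the same $L^1$ norm. Alternatively, apply it to the weight $w^{-1}$, whose $\mathcal{A}_2$ constant is the same and for which $\log w^{-1}=-\log w$. Either way, the lower bound $-\int f\log w\le\cdots$ follows.

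The substantive work has already been done in Lemma \ref{gluing}, so here the main points to check are these. First, $B$ is continuous on the closed disks $\overline{D(0,r)}$, which Lemma \ref{gluing} provides on all of $\D$, so the mean inequality applies. Second, the value $B(0)=0$ uses $F(0)=0$, and this is exactly where the mean-zero hypothesis enters. Third, the limit $r\to1$ in the pairing term is harmless because $f$ is a polynomial.
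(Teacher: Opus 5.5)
Your proposal is correct and follows the paper's proof essentially step for step. Both use $U=Pw$, $V=P(w^{-1})$, $h=P(\log w)$ with Jensen and Lemma~\ref{poisson-A2}, the same $F$ with $B(0)=0$, the mean-value inequality \eqref{supermean} on circles of radius $r$, the limit $r\to1$, and then the same bound for $-f$; your Parseval computation of the pairing term and the option of passing to $w^{-1}$ are valid minor variants.
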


\begin{proof}
The elementary inequality $|\log w|\le w+w^{-1}$ gives $\log\ w\in L^1$. Define
$$
h=P(\log w),
\quad
U=Pw,
\ \ \ \text{ and } \
V=P(w^{-1}).
$$
The Poisson extensions are harmonic, and $U,V$ are strictly positive. Jensen's inequality for the probability measure $P_z\,dm$ gives
$$
h=P(\log w)\le\log(Pw)=\log U.
$$
Jensen's inequality also gives $-h\le\log V$, as $\log(w^{-1})=-\log\ w$. Thus, $-\log V\le h\le\log U$. By Lemma \ref{poisson-A2},
$$
1\le UV\le K=\frac{512}{3}[w]_{\mathcal{A}_2}^2.
$$

For a real mean-zero trigonometric polynomial $f$, note that its Fourier coefficients satisfy $\widehat{f}(0)=0$, $\widehat{f}(-n)=\overline{\widehat{f}(n)}$ and 
$
\widehat{(f+i Hf)}(n)=2\widehat{f}(n)
$
for $n>0$ and zero otherwise. 

Now set
\begin{displaymath}
F(z)=2\sum_{n=1}^N\widehat f(n)z^n.
\end{displaymath}
The sum is finite and $F(0)=0$. Also, $F$ has the boundary values $f(e^{it})+iHf(e^{it})$. Since both $f$ and $Hf$ are real valued, $\textrm{Re} F(e^{it})=f(e^{it})$ and $\textrm{Im} F(e^{it})=Hf(e^{it})$. Thus, if $F(z)=x(z)+iy(z)$, we have
\begin{displaymath}
x(z)=Pf(z)
\text{ and }
y(z)=P(Hf)(z).
\end{displaymath}

Let $B$ be the superharmonic function from Lemma \ref{gluing}. Since $F(0)=0$, we have $B(0)=0$. Inequality \eqref{lower bnd} and lemma \ref{upper-supports} give, for $0<r<1$,
\begin{equation}\label{negative int}
\int_\T
\left(h(r\zeta)x(r\zeta)-M|x(r\zeta)|-\left(\frac{\pi}{2}+\eta\right)|y(r\zeta)|\right)
\,dm(\zeta)
\le
\int_\T B(r\zeta)\,dm(\zeta)
\le  B(0)=0.
\end{equation}

Write $h_r(\zeta)=h(r\zeta)=(P_r*\log\ w)(\zeta)$ and similarly for $x_r=P_r*f$ and $y_r=P_r*Hf$. As $r$ increases to one, the finite Fourier sums give uniform convergence: $x_r\longrightarrow f$ and $y_r\longrightarrow Hf$. Also, $h_r\to \log\ w$ in $L^1$. Notice that
$$
\bigg|\int_\T h_rx_r\,dm-\int_\T f\log w\,dm\bigg|
\le
\|h_r-\log w\|_1\|x_r\|_\infty
+
\|\log w\|_1\|x_r-f\|_\infty
\longrightarrow 0.
$$
Here $\|x_r\|_\infty\le \|f\|_\infty $. Therefore, $\displaystyle \int_\T h_rx_r\,dm\to \int_\T f\log w\,dm$ as $r\to 1$, while from \eqref{negative int},
\begin{equation}\label{int hr}
   \int_\T h_rx_r\,dm \le M\int_{\mathbb{T}}|x_r|dm+\left(\frac{\pi}{2}+\eta\right)\int_{\mathbb{T}}|y_r|dm
\end{equation}
Since $x_r\to f$ uniformly and $||x_r|-|f||\le |x_r-f|$, we get $\displaystyle \int_{\mathbb{T}}|x_r|dm\to \Vert f\Vert_1$. Similarly, $\displaystyle\int_{\mathbb{T}}|y_r|dm\to\Vert Hf\Vert_1$. Hence, for $r\to 1$, \eqref{int hr} implies
$$
\int_\T f\log w\,dm
\le
M\|f\|_1+\left(\frac{\pi}{2}+\eta\right)\|Hf\|_1.
$$
Repeating the same argument with $-f$ yields $\displaystyle 
\int_\T (-f)\log w\,dm
\le
M\|f\|_1+\left(\frac{\pi}{2}+\eta\right)\|Hf\|_1.
$
The two last inequalities combined imply \eqref{basic ineq}.  
\end{proof}

\section{Proof of Theorem \ref{main}}
\label{sec:forward}

\begin{proposition}\label{thm:HS}
Let $w$ be an $\mathcal{A}_2$ weight on \(\mathbb{T}\). Then there exist real-valued functions $u,v\in L^\infty(\mathbb{T})$ such that
\[
\log w=u+Hv
\text{ with }
\Vert v \Vert_\infty<\frac{\pi}{2}.
\]
\end{proposition}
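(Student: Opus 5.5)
Apply Proposition \ref{graph-estimate} to a power of the weight: this gives a coefficient strictly below $\frac{\pi}{2}$ on $\|Hf\|_1$, and Hahn--Banach then produces $v$. By Lemma \ref{self-improvement} there is $\delta>0$ with $w^{1+\delta}\in\mathcal{A}_2(\T)$. Apply Proposition \ref{graph-estimate} to $w^{1+\delta}$ with some $\eta>0$ and divide by $1+\delta$. For every real mean-zero trigonometric polynomial $f$ this yields
$$
\bigg|\int_\T f\log w\,dm\bigg|\le M'\|f\|_1+\gamma\|Hf\|_1,
\qquad
\gamma=\frac{\frac{\pi}{2}+\eta}{1+\delta}.
$$
Choose $\eta$ small enough that $\gamma<\frac{\pi}{2}$.

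Next, define the linear functional $\Lambda(f,Hf)=\int_\T f\log w\,dm$ on the subspace $\mathcal{G}=\{(f,Hf)\}$ of $L^1(\T)\times L^1(\T)$, where $f$ ranges over real mean-zero trigonometric polynomials. Equip the product with the norm $\|(a,b)\|=M'\|a\|_1+\gamma\|b\|_1$. The map $f\mapsto (f,Hf)$ is injective, so $\Lambda$ is well defined, and the estimate above says it has norm at most $1$. Hahn--Banach extends $\Lambda$ to the whole real space $L^1\times L^1$ with the same bound. By the duality $(L^1)^*=L^\infty$, the extension is represented by a pair $(a,b)\in L^\infty\times L^\infty$ with $\|a\|_\infty\le M'$ and $\|b\|_\infty\le\gamma<\frac{\pi}{2}$, and
$$
\int_\T f\log w\,dm=\int_\T fa\,dm+\int_\T (Hf)b\,dm
$$
for all such $f$.

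Using $H^*=-H$, rewrite the right-hand side as $\int_\T f\,(a-Hb)\,dm$. Here $Hb$ is interpreted distributionally through Fourier coefficients; since $b\in L^2$, in fact $Hb\in L^2$. So $\log w-a+Hb$ annihilates every real mean-zero trigonometric polynomial. Taking $f=\cos nt,\sin nt$ shows all its nonzero Fourier coefficients vanish, hence it is a constant $c$. Set $u=a+c\in L^\infty$ and $v=-b$. Then $\log w=u+Hv$ almost everywhere with $\|v\|_\infty\le\gamma<\frac{\pi}{2}$.

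The main obstacle is securing the strict inequality $\|v\|_\infty<\frac{\pi}{2}$. Proposition \ref{graph-estimate} alone gives only $\frac{\pi}{2}+\eta$, so the whole point is the rescaling by $1+\delta$. This requires that $M$ in Proposition \ref{graph-estimate} depend only on $[w^{1+\delta}]_{\mathcal{A}_2}$, which Lemma \ref{self-improvement} bounds. A secondary check is that $\log w\in L^1$, so that the pairing and Fourier-coefficient identification are legitimate; this follows from $|\log w|\le w+w^{-1}$.
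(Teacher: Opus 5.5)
Your proposal is correct and follows the paper's proof essentially step for step: you pass to $w^{1+\delta}$ via self-improvement and apply Proposition~\ref{graph-estimate} with $\eta<\frac{\pi\delta}{2}$ (the paper takes $\eta=\frac{\pi\delta}{4}$) to get a coefficient below $\frac{\pi}{2}$, then use Hahn--Banach on the graph subspace of $L^1\oplus L^1$ with the weighted norm, represent the extension coordinatewise in $L^\infty$, and identify the leftover constant through skew-adjointness of $H$ and Fourier coefficients. One small correction: for this qualitative statement you do not actually need $M$ to depend only on $[w^{1+\delta}]_{\mathcal{A}_2}$, since any finite $M$ for the fixed weight $w^{1+\delta}$ suffices.
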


\begin{proof}
Let $w\in \mathcal{A}_2$. Lemma \ref{self-improvement} gives $\delta>0$ such that $w^{1+\delta}\in \mathcal{A}_2$. Because $\log(w^{1+\delta})=(1+\delta)\log w$, Proposition \ref{graph-estimate} to $w^{1+\delta}$ with
\begin{displaymath}
\eta=\frac{\pi\delta}{4}
\end{displaymath}
gives that there is a finite $M_\delta$ such that
\begin{displaymath}
(1+\delta)\bigg|\int_\T f\log w\,dm\bigg|
\le
M_{\delta}\|f\|_1+
\bigg(\frac{\pi}{2}+\frac{\pi\delta}{4}\bigg)\|Hf\|_1.
\end{displaymath}
Enlarging the first coefficient if necessary, choose $C>0$ such that
\begin{displaymath}
\bigg|\int_\T f\log w\,dm\bigg|
\le
C\|f\|_1+\alpha\|Hf\|_1,
\qquad
\alpha=\frac{\pi(\delta+2)}{4(\delta+1)}.
\end{displaymath}
Note that
\begin{displaymath}
\frac{\pi}{2}-\alpha
=
\frac{\pi\delta}{4(\delta+1)}
>
0.
\end{displaymath}

Equip now the real Banach space
\begin{displaymath}
X=L^1(\T;\mathbb R)\oplus L^1(\T;\mathbb R)
\end{displaymath}
with the norm $\|(g,k)\|_X=C\|g\|_1+\alpha\|k\|_1$. Consider its linear subspace
\begin{displaymath}
Y=\{(f,Hf):f\text{ is a real mean-zero trigonometric polynomial}\}.
\end{displaymath}
Define $\Lambda$ on $Y$ by
\begin{displaymath}
\Lambda(f,Hf)=\int_\T f\log w\,dm.
\end{displaymath}
The first coordinate uniquely determines every point of $Y$, so $\Lambda$ is well-defined. Inequality \eqref{basic ineq} gives precisely that $|\Lambda(z)|\le\|z\|_X$ for $z\in Y$.

The real Hahn--Banach theorem gives an extension $\widetilde\Lambda$ to $X$ with the same bound. The subspace $Y$ need not be closed. Restricting the extension to each coordinate gives
\begin{displaymath}
|\widetilde\Lambda(g,0)|\le C\|g\|_1
\ \ \text{ and }\ \
|\widetilde\Lambda(0,k)|\le\alpha\|k\|_1.
\end{displaymath}
Real $L^1$ duality therefore provides real $u_0,a_0\in L^\infty$ satisfying
\begin{displaymath}
\widetilde\Lambda(g,0)=\int_\T u_0g\,dm
\ \ \text{ and }\ \
\widetilde\Lambda(0,k)=\int_\T a_0k\,dm,
\end{displaymath}
with $\|u_0\|_\infty\le C$ and $\|a_0\|_\infty\le\alpha$. By linearity of $\widetilde{\Lambda}$ and agreement with $\Lambda$ on $Y$,
\begin{displaymath}
\int_\T f\log w\,dm
=
\int_\T u_0f\,dm+
\int_\T a_0Hf\,dm
\end{displaymath}
for every real mean-zero trigonometric polynomial.

Since $a_0\in L^\infty(\mathbb{T})$, we have $a_0\in L^2(\mathbb{T})$, and $Ha_0\in L^2(\mathbb{T})\subset L^1(\mathbb{T})$. Skew-adjointness gives
\begin{displaymath}
\int_\T a_0Hf\,dm=-\int_\T(Ha_0)f\,dm.
\end{displaymath}
Consequently the real $L^1$ function
\begin{displaymath}
q=\log w-u_0+Ha_0
\end{displaymath}
annihilates every real mean-zero trigonometric polynomial. For each $n\ge1$, the functions $\cos(nt)$ and $\sin(nt)$ are admissible test functions, so
$\int_\T q\cos(nt)\,dm=\int_\T q\sin(nt)\,dm=0$.
Since $e^{\pm int}=\cos(nt)\pm i\sin(nt)$, all nonconstant Fourier coefficients of $q$ vanish. Set $c=\int_\T q\,dm$. Then $q-c$ also has zero constant coefficient, so all its Fourier coefficients are zero. Every Fej\'er mean of $q-c$ is a finite sum of these coefficients and is therefore identically zero. Since the Fej\'er means converge to $q-c$ in $L^1$, we obtain $\|q-c\|_1=0$, and hence $q=c$ almost everywhere.

Thus
\begin{displaymath}
\log w=u_0-Ha_0+c.
\end{displaymath}
Set $u=u_0+c$ and $v=-a_0$. We obtain that $u,v\in L^\infty(\T)$ and
\begin{displaymath}
w=e^{u+Hv},
\text{ with }
\|v\|_\infty\le\alpha<\frac{\pi}{2}.
\end{displaymath}
This concludes the proof of the Proposition \ref{thm:HS}.
\end{proof}


\begin{proposition}\label{converse}
The Helson-Szeg\H{o} condition \eqref{helsz} implies the $\mathcal{A}_2$ condition \eqref{A2 weights}.    
\end{proposition}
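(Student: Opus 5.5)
The plan is to follow Garc\'ia-Cuerva's argument through the Poisson characterization in Lemma~\ref{poisson-A2}. Write $w=e^{u+Hv}$ with $u,v\in L^\infty$ and $\|v\|_\infty\le\beta<\frac{\pi}{2}$. Since $e^{u}$ is bounded above and below, it changes $Pw\,P(w^{-1})$ by at most the factor $e^{2\|u\|_\infty}$. So it suffices to treat $w=e^{Hv}$. Integrability of $w$ and $w^{-1}$ comes from Zygmund's exponential integrability: $e^{\lambda|Hv|}\in L^1$ for $\lambda\|v\|_\infty<\frac{\pi}{2}$. I would prove this at the same time as the main bound, since the argument below yields it.

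The key construction is the holomorphic function $G=Hv-iv$ extended to $\D$. Precisely, take $G(z)=P(Hv)(z)-iPv(z)$, up to the sign convention making $G$ holomorphic; note $\widehat{Hv}=-i\sgn(n)\hat v$, so $v+iHv$ is the boundary value of a holomorphic function and $G=-i(v+iHv)$ up to the constant $\hat v(0)$. Then $\Real G=P(Hv)$ and $|\Imag G|=|Pv|\le\beta$. Hence $e^{G/2}$ and $e^{-G/2}$ take values in the sector $|\arg\zeta|\le\beta/2<\frac{\pi}{4}$. On that sector $\Real(\zeta^2)\ge\cos\beta\,|\zeta|^2$. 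Thus
$$
\cos\beta\; e^{P(Hv)}\le \Real e^{G}=P(e^{Hv}) \quad\text{(after justification)},
$$
and more importantly $|e^{\pm G}|\le \sec\beta\,\Real e^{\pm G}$.

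To bound $Pw\,P(w^{-1})$, I will use that $\Real e^{G}$ is positive harmonic. Its boundary values are $e^{Hv}\cos v\ge \cos\beta\, w$. Then $P(w)\le \sec\beta\, P(e^{Hv}\cos v)=\sec\beta\,\Real e^{G}$, using that a positive harmonic function dominates the Poisson integral of its boundary values, by Fatou. Similarly $P(w^{-1})\le\sec\beta\,\Real e^{-G}$. Multiplying gives
$$
Pw\,P(w^{-1})\le \sec^2\!\beta\,\Real e^{G}\,\Real e^{-G}\le \sec^2\!\beta\,|e^{G}||e^{-G}|=\sec^2\!\beta.
$$
Taking $z=0$ gives $w,w^{-1}\in L^1$. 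Lemma~\ref{poisson-A2} (converse part) then yields $[w]_{\mathcal{A}_2}\le(1+\pi^2)^2e^{2\|u\|_\infty}\sec^2\beta$.

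The main obstacle is justifying that $\Real e^{G}$ is a positive harmonic function whose boundary values are $e^{Hv}\cos v$ almost everywhere. $G$ is not bounded, so the identity $P(\text{boundary values})=\Real e^{G}$ needs care; only the inequality $\ge$ is required. I would proceed as follows. $\Real e^{G}>0$ on $\D$ because $|\Imag G|<\frac{\pi}{2}$, so it is positive harmonic. Positive harmonic functions are Poisson integrals of positive measures. The absolutely continuous part of that measure has density equal to the radial limit, which is $e^{Hv}\cos v$ a.e., because $Pv\to v$ and $P(Hv)\to Hv$ nontangentially a.e. Dropping the singular part gives $P(e^{Hv}\cos v)\le\Real e^{G}$, which is all that is used. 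Since $Hv$ need only be in $L^2$ for $v\in L^\infty$, all Poisson extensions make sense.
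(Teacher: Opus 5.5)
Your proof is correct and is essentially the paper's argument. It uses the same holomorphic function $P(Hv)-iPv$, the sector bound $|e^{\pm G}|\le\sec\beta\,\Real e^{\pm G}$, the product bound $\sec^2\beta$ after removing $e^{u}$, and the converse half of Lemma~\ref{poisson-A2}. The only difference is how you pass to the boundary: the paper applies Fatou's lemma to the Poisson formula on the circle of radius $r$, which is more elementary than your Herglotz representation plus identification of the absolutely continuous part, and yields the same inequality. One slip to correct: the aside $\Real e^{G}=P(e^{Hv})$ in your first display is false, since the boundary values of $\Real e^{G}$ are $e^{Hv}\cos v$, but nothing later depends on it.
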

\begin{proof}
 Suppose that $\log w =u +Hv$, where $u,v \in L^{\infty}(\mathbb{T})$ with $\beta=\Vert v \Vert_\infty<\frac{\pi}{2}$.

Define
\begin{displaymath}
L(z)=P(Hv)(z)-iPv(z),
\end{displaymath}

Using that $\displaystyle Pv(re^{it})=\sum_{n\in\mathbb{Z}}r^{|n|}\widehat{v}(n)e^{int}$ and that $\widehat{Hv}(n)=-i\text{sgn}(n)\widehat{v}(n)$, we get that 
\begin{displaymath}
L(z)
=
-i\widehat v(0)
-
2i\sum_{n\ge1}\widehat v(n)z^n.
\end{displaymath}
Note that, since $v\in L^2(\mathbb{T})$, this power series converges locally uniformly in $\D$. Also note that $L$ is holomorphic on $\mathbb{D}$. Define now the holomorphic function $G(z)=e^{L(z)}$.

Since the Poisson kernel is positive and its integral equals 1, we obtain $|Pv|\le\Vert v\Vert_\infty=\beta$. Thus, $|\Imag L|\le\beta$ and this implies that 
both $G$ and $G^{-1}$ lie in the sector of half-angle $\beta$ about the positive real axis. In particular, they have positive real part and satisfy, for every $z\in\mathbb{D}$, 
\begin{equation}\label{sector ineq}
|G(z)|\le\sec\beta\,\Real G(z)
\ \ \text{ and }\ \
|G(z)|^{-1}\le\sec\beta\,\Real(G(z)^{-1}).
\end{equation}
For example, the first inequality follows by noticing that $\Real G=e^{\Real L}\cos(\Imag L)$ and since cosine decreases on $[0,\frac{\pi}{2}]$ we have $\Real G \ge |G|\cos\beta$. Similarly, the second one follows.

For each fixed $r<1$, we have
\begin{displaymath}
\int_\T |G(r\zeta)|\,dm(\zeta)
\le
\sec\beta\int_\T \Real G(r\zeta)\,dm(\zeta)
=
\sec\beta\,\Real G(0),
\end{displaymath}
by the mean-value property of harmonic functions. The identical argument applied to $G^{-1}$ gives
\begin{displaymath}
\int_\T |G(r\zeta)|^{-1}\,dm(\zeta)
\le
\sec\beta\,\Real(G(0)^{-1}).
\end{displaymath}
Note that the right-hand sides are finite and independent of $r$. Also, $G(0)=e^{-i\int_\T v\,dm}<\infty$.

The Poisson extensions of $Hv\in L^2(\mathbb{T})$ and $v\in L^\infty(\mathbb{T})$ converge almost everywhere to their boundary functions. Putting $g(\zeta)=e^{Hv(\zeta)}$, we have
\begin{displaymath}
|G(r\zeta)|\longrightarrow g(\zeta)
\ \ \text{ and }\ \
|G(r\zeta)|^{-1}\longrightarrow g(\zeta)^{-1}
\end{displaymath}
almost everywhere as $r\to 1^{-}$. Applying Fatou's lemma to these nonnegative functions proves
$$
\int_\T g\,dm\le\sec\beta\,\Real G(0)<\infty
\ \ \text{ and }\ \
\int_\T g^{-1}\,dm\le\sec\beta\,\Real(G(0)^{-1})<\infty.
$$
That is, $e^{Hv}, e^{-Hv}\in L^1(\mathbb{T})$. This implies that $w, w^{-1}\in L^1(\mathbb{T})$. In fact, since $u$ is bounded,
$$
\int_T w\, dm =\int_T e^ug\, dm \le e^{\Vert u\Vert_\infty} \int_T g\, dm \le e^{\Vert u\Vert_\infty}\sec\beta\,\Real G(0)
$$
and similarly we obtain that $w^{-1}\in L^1(\mathbb{T})$.

Fix $z\in\D$ and choose $|z|<r<1$. The Poisson formula for harmonic functions on the disk of radius $r$ gives
$$
\Real G(z)
=
\int_\T P_{z/r}(\zeta)\Real G(r\zeta)\,dm(\zeta).
$$
Combining this with inequality \eqref{sector ineq}, we obtain
$$
\int_\T P_{z/r}(\zeta)|G(r\zeta)|\,dm(\zeta)
\le
\sec\beta\,\Real G(z).
$$
As $r\to 1$, $P_{z/r}\to P_z$ and $|G(r\zeta)|\to g(\zeta)$ almost everywhere.  Fatou's lemma gives that
$$
\int_{\mathbb{T}}P_z(\zeta)g(\zeta)dm(\zeta)\le \sec\beta\,\Real G(z)
$$
Therefore,
$$
Pg(z)\le\sec\beta\,\Real G(z).
$$
The same proof for $G^{-1}$ gives
$$
P(g^{-1})(z)\le\sec\beta\,\Real(G(z)^{-1}).
$$
Thus,
$$
Pg(z)P(g^{-1})(z)\le\sec^2\beta.
$$
since $\Real(G(z))\Real(G(z)^{-1})=\cos^2(\Imag L(z))\le 1$.

Let $\varphi=\operatorname{ess\,sup}u-\operatorname{ess\,inf}u$. Positivity of $g$ and the Poisson kernel and boundedness of $u$ give
$$
Pw(z)\le e^{\operatorname{ess\,sup}u}Pg(z)
\ \ \text{ and }\ \
P(w^{-1})(z)\le e^{-\operatorname{ess\,inf}u}P(g^{-1})(z).
$$
Therefore,
$$
Pw(z)P(w^{-1})(z)\le e^\varphi\sec^2\beta.
$$
Lemma \ref{poisson-A2} proves $w\in \mathcal{A}_2$, completing Proposition \ref{converse}. 
\end{proof}

To conclude, Propositions \ref{thm:HS} and \ref{converse} combined prove Theorem \ref{main}.

\hspace{1cm}

\end{document}